\title{Log improvement of the Prodi-Serrin criteria for Navier-Stokes equations}
\author{Chi Hin Chan, Alexis Vasseur}
\date{}
\newtheoremstyle{modthm}
     {15pt}
     {3ptpt}
     {\itshape}
     {}
     {\bfseries}
     {.}
     {.5em}
     {}
\newtheoremstyle{modrem}
     {15pt}
     {0pt}
     {\rmfamily}
     {}
     {\itshape}
     {.}
     {.5em}
     {}
\theoremstyle{modthm}
\newtheorem{thm}{Theorem}
\newtheorem{prop}{Proposition}[section]
\newtheorem{lem}{Lemma}[section]
\theoremstyle{modrem}
\newcommand{\Div}{\mathrm{div}}
\newcommand{\R}{\mathbb{R}}
\begin{document}

\bibliographystyle{plain}
\maketitle
\begin{center}
Department of Mathematics \\
University of Texas at Austin
\end{center}
{ \small \noindent{\bf Abstract:} This article is devoted to a Log
improvement of Prodi-Serrin criterion for global regularity to
solutions to Navier-Stokes equations in dimension 3. It is shown
that the global regularity holds under the condition that
$|u|^5/(\log(1+|u|))$ is integrable in space time variables. }

\vskip0.2cm \noindent {\bf keywords:} Navier-Stokes, regularity
criterion, a priori estimates

\vskip0.2cm\noindent {\bf MSC:}  35B65, 76D03, 76D05

\section{Introduction}

In this article, we consider  the Navier-Stokes equation on
$\mathbb{R}^3$, given by
\begin{gather}\label{NS}
\partial_{t} u -\triangle u + \Div (u\otimes u) + \nabla p = 0 , \\
\Div (u) = 0, \label{incompressibility}
\end{gather}
where $u$ is a vector-valued function representing the velocity of
the fluid, and $p$ is the pressure. Note that the pressure depends
in a non local way on the velocity $u$. It can be seen as a Lagrange
multiplier associated to the incompressible condition
(\ref{incompressibility}). The  initial  value problem of the above
equation is endowed with the condition that $u(0, \cdot ) = u_{0}
\in L^2(\mathbb{R}^3)$. Leray~\cite{Leray} and Hopf~\cite{Hopf} had
already established the existence of global weak solutions for the
Navier-Stokes equation. In particular, Leray introduced a notion of
weak solutions for the Navier-Stokes equation, and  proved that, for
every given initial datum $u_{0} \in L^2(\mathbb{R}^3)$, there
exists a global weak solution $u \in L^{\infty}(0, \infty ;
L^2(\mathbb{R}^3)) \cap L^2(0, \infty ; \dot{H}^1(\mathbb{R}^3))$
verifying the Navier-Stokes equation in the sense of distribution.
From that time on, much effort has been devoted to establish the
global existence and uniqueness of smooth solutions to the
Navier-Stokes equation. Different Criteria for regularity of the
weak solutions have been proposed. The Prodi-Serrin conditions (see
Serrin \cite{Serrin}, Prodi \cite{Prodi}, and \cite{Struwe}) states
that any weak Leray-Hopf solution verifying $u\in
L^p(0,\infty;L^q(\R^3))$ with $2/p+3/q=1$, $2\leq p<\infty$, is
regular on $(0,\infty)\times\R^3$. Notice that if $p=q$, this
corresponds to $u\in L^5((0,\infty)\times\R^3)$. The limit case of
$L^\infty(0,\infty; L^3(\R^3))$ has been solved very recently by L.
Escauriaza, G. Seregin, and V. Sverak (see \cite{Esca}). Other
criterions have been later introduced, dealing with some derivatives
of the velocity. Beale Kato and Majda \cite{Beale} showed the global
regularity under  the condition that the vorticity
$\omega=\mathrm{curl}\ u$ lies in $L^\infty(0,\infty;L^1(\R^3))$
(see Kozono and Taniuchi for improvement of this result
\cite{Kozono}). Beir\~ao da Veiga show in \cite{Vaiga} that the
boundedness of $\nabla u$ in $L^p(0,\infty; L^q(\R^3))$ for
$2/p+3/q=2$, $1<p<\infty$ ensures the global regularity. In
\cite{Constantin}, Constantin and Fefferman gave a condition
involving only the direction of the vorticity. Let us also cite a
condition involving the lower bound of the pressure introduced by
Seregin and Sverak in \cite{Sereginpressure}, and conditions
involving only one of the component of $u$ (see Penel and Pokorny
\cite{Penel}, He \cite{He}, and Zhou \cite{Zhou}).

This article is devoted to the following log improvement of the
Prodi-Serrin criterion corresponding to $p=q=5$:
\begin{thm}\label{main}
Suppose that $u$ is a weak Leray-Hopf solution of the Navier-Stokes
equation (\ref{NS}) (\ref{incompressibility}) satisfying
$$
\int_{0}^{\infty}\int_{\mathbb{R}^3}\frac{|u|^5}{\log(1 +
|u|)}dx\,ds < \infty,
$$
then, $u\in C^{\infty}((0, \infty )\times\mathbb{R}^3)$.
\end{thm}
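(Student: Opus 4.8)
The plan is to show that the hypothesis forces $u\in L^\infty_{\mathrm{loc}}((0,\infty)\times\mathbb{R}^3)$, after which $u\in C^\infty$ follows from the classical Serrin/parabolic bootstrap: a locally bounded Leray--Hopf solution is smooth. The whole difficulty is therefore concentrated in the local $L^\infty$ bound, and I would attack it through a De Giorgi--type iteration on the level sets of $|u|$, which is the natural tool here precisely because it tolerates the logarithmic correction in the assumption. As a preliminary reduction I would note that under the stated integrability the Leray--Hopf solution is a \emph{suitable} weak solution, so that the local energy inequality
\[
\sup_t\int |u|^2\phi\,dx+2\int\!\!\int|\nabla u|^2\phi \le \int\!\!\int |u|^2(\partial_s\phi+\Delta\phi)+\int\!\!\int(|u|^2+2p)\,u\cdot\nabla\phi
\]
is available on every parabolic cylinder, with the pressure represented by $p=-\sum_{i,j}R_iR_j(u_iu_j)$ through the Riesz transforms $R_i$.

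For the iteration I would fix a point and a family of shrinking parabolic cylinders $Q_k$, cutoffs $\phi_k$, and truncation levels $C_k\uparrow$, and set $v_k=(|u|-C_k)_+$ together with the energy
\[
U_k=\sup_t\int v_k^2\phi_k^2\,dx+\int\!\!\int\big|\nabla(v_k\phi_k)\big|^2\,dx\,ds .
\]
Inserting the truncation into the local energy inequality and using the parabolic Sobolev embedding $L^\infty_tL^2_x\cap L^2_t\dot H^1_x\hookrightarrow L^{10/3}_{t,x}$ gives $\|v_k\phi_k\|_{L^{10/3}}^{2}\lesssim U_k$, while Chebyshev on the level set $A_{k+1}=\{|u|>C_{k+1}\}\cap Q_{k+1}$ bounds its measure by a negative power of $(C_{k+1}-C_k)\sim 2^{-k}$ times $U_k^{5/3}$. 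Distributing Hölder so that the critical exponent $5$ lands on $u$ and the surplus integrability is paid for by $U_k$, I would derive a nonlinear recurrence of the form
\[
U_{k+1}\le C\,b^{k}\,\Lambda_k\,U_k^{1+\beta},\qquad \beta>0,\ b>1,\ \Lambda_k=\int_{A_k}|u|^5 ,
\]
where $\Lambda_k$ is the critical mass carried by the forcing and pressure terms. In the pure $L^5$ Prodi--Serrin case one closes this by choosing $C_0$ so large that the tail $\int_{\{|u|>C_0\}}|u|^5$ is small, whence $U_k\to0$. Here one controls only $\int \frac{|u|^5}{\log(1+|u|)}$, so the entire subtlety is that $\Lambda_k$ must be re-expressed through the weighted quantity: on $A_k$ one writes $|u|^5=\log(1+|u|)\cdot\frac{|u|^5}{\log(1+|u|)}$, and the lost factor $\log(1+|u|)$ is reabsorbed, at the cost of a power of $\log(1+C_k)$, into the geometric constant $b^k$ and the super-linear gain $U_k^{\beta}$. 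The outcome is an a priori estimate in which the final $L^\infty$ norm is controlled by an exponential of $\int\frac{|u|^5}{\log(1+|u|)}$: because only the logarithm of the (a priori unknown) supremum enters when one converts the critical mass into its weighted version, one obtains a finite—though exponentially large—bound, and it is exactly the logarithmic weight that renders this conversion affordable in the \emph{critical} case, without any smallness of $\int|u|^5$.

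I expect the pressure to be the principal obstacle, as it always is in De Giorgi schemes for Navier--Stokes, because it is nonlocal. I would split $p=p_{\mathrm{near}}+p_{\mathrm{far}}$, where $p_{\mathrm{near}}=-\sum_{i,j}R_iR_j\big((u_iu_j)\mathbf 1_{2Q_k}\big)$ is handled by the Calderón--Zygmund bound $\|p_{\mathrm{near}}\|_{L^{5/2}}\lesssim\|u\|_{L^5(2Q_k)}^2$ (itself to be run in the log-weighted form), while $p_{\mathrm{far}}$ is harmonic on $Q_k$ and hence controlled pointwise by lower-order local averages that the energy absorbs. The delicate point is to check that after this splitting the logarithmic factor survives cleanly in $p_{\mathrm{near}}$, so that the recurrence still closes; keeping the bookkeeping of the logarithm consistent through the Calderón--Zygmund step and the Sobolev/Hölder interpolation is, to my mind, the true technical heart of the argument. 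Once $u\in L^\infty_{\mathrm{loc}}$ is established, the passage to $u\in C^\infty((0,\infty)\times\mathbb{R}^3)$ is routine parabolic regularity.
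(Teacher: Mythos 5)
Your overall architecture (truncation/De Giorgi iteration to reach $L^\infty_{\mathrm{loc}}$, then Serrin bootstrap for smoothness) matches the paper's in spirit, but the one step where you actually use the hypothesis is exactly the step that fails. In your recurrence $U_{k+1}\le C\,b^k\Lambda_k U_k^{1+\beta}$ with $\Lambda_k=\int_{A_k}|u|^5$, you propose to control $\Lambda_k$ by writing $|u|^5=\log(1+|u|)\cdot\frac{|u|^5}{\log(1+|u|)}$ on $A_k=\{|u|>C_k\}$ and reabsorbing the factor $\log(1+|u|)$ ``at the cost of a power of $\log(1+C_k)$.'' That inequality goes the wrong way: on $A_k$ one has $\log(1+|u|)\ge\log(1+C_k)$, not $\le$, and $A_k$ is precisely the set where $|u|$ is large, so the log factor is unbounded there. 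Note also that the hypothesis does not give $u\in L^5$ (only $u\in L^{5-\epsilon}$ on the set $\{|u|\ge 1\}$ for each $\epsilon>0$), so $\Lambda_k$ may be infinite for every $k$ and no tail smallness in $C_0$ is available. The only admissible upper bound for the log factor would be $\log(1+\|u\|_{L^\infty})$, which you acknowledge, but this is circular: $\|u\|_{L^\infty}$ is the quantity being estimated, it is not a priori finite for a weak solution, and you provide no approximation, continuation, or seed-regularity argument to break the circle. Finally, smallness cannot be recovered by zooming either: under $u_\varepsilon(t,x)=\varepsilon u(\varepsilon^2t,\varepsilon x)$ the weighted quantity becomes $\int\!\!\int|u|^5/\log(1+\varepsilon|u|)\,dx\,dt$, whose weight deteriorates as $\varepsilon\to0$; the hypothesis is strictly supercritical, so a purely local iteration on shrinking cylinders of the kind you set up cannot close.

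The paper avoids this by an architecture in which the logarithm never enters the De Giorgi scheme. First, a De Giorgi iteration, run globally in $x$ on $[-1,1]\times\mathbb{R}^3$ (with the pressure split by truncation of $u$, $P=P_{k1}+P_{k2}$, rather than by spatial cutoff), produces a log-free, scaling-invariant, affine estimate:
\[
\|u(T,\cdot)\|_{L^{\infty}(\mathbb{R}^3)}\le A_\lambda\Bigl(1+\int_0^T\!\!\int_{\mathbb{R}^3}|u|^6\,dx\,ds\Bigr),\qquad T>\lambda.
\]
Second, the hypothesis is used only through the pointwise bound $|u|^6\le \|u(s)\|_{L^\infty}\log(1+\|u(s)\|_{L^\infty})\,\frac{|u|^5}{\log(1+|u|)}$, which is legitimate here because $r\mapsto r\log(1+r)$ is increasing and the log is evaluated at values dominated by the spatial supremum at the fixed time $s$. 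This converts the affine estimate into $F(t)\le A\bigl(1+\int_{\tau_1}^t\psi(F(s))G(s)\,ds\bigr)$ with $F(t)=\|u(t)\|_{L^\infty}$, $\psi(F)=F\log(1+F)$ and $G\in L^1(0,\infty)$; since $\int^\infty dF/\psi(F)=\infty$, an Osgood--Gronwall argument in time yields $F(t)<\infty$ for all $t$. The a priori finiteness needed to launch this argument ($F$ bounded on some $[\tau_1,\tau_2]$) is supplied by Kato's $L^4$ local well-posedness theorem, available because $|u|^4\le|u|^5/\log(1+|u|)$ puts $u(\tau_0)\in L^4$ for a.e.\ $\tau_0$. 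These two ingredients --- the Osgood ODE structure in time and the Kato seed --- are precisely what your proposal lacks, and without them the ``exponentially large but finite bound'' you invoke cannot be derived.
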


Montgomery-Smith introduced the following criterium in \cite{Smith}:
$$
\int_0^\infty\frac{\|u(t)\|^p_{L^q(\R^3)}}{1+\log^+\|u(t)\|_{L^q(\R^3)}}\,dt<\infty.
$$
Notice that the log improvement is, here, in time only. This can be
seen as a natural Gronwall type extension of the Prodi-Serrin
conditions. So we can see it as a one dimension ODE type extension.

The goal of our result is to extend this log improvement also in
$x$. For this purpose we focused on the homogeneous case $p=q=5$,
even though extension to the Prodi-Serrin range $2\leq p<\infty$
should be doable.

The proof of Theorem \ref{main} is split into two parts. The first
point is to show that for any time $t>\lambda$, the $L^\infty$ norm
of $u$ in $x$ can be bounded in a affine way by
$$
\int_0^t\int_{\R^3}|u|^6\,dx\,dt.
$$
More precisely, we will show the following Proposition:
\begin{prop}\label{third}
For every $\lambda$ satisfying $0< \lambda < 2$, there exists some
universal constant $A_{\lambda} > 0$, depending only on $\lambda$,
such that, for any solution $u$ of the Navier-Stokes equation on
$(0, \infty )\times \mathbb{R}^3$, we have $\|u(T, \cdot
)\|_{L^{\infty}(\mathbb{R}^3)} \leqslant A_{\lambda} \{ 1 +
\int_{0}^{T}\int_{\mathbb{R}^3}|u|^6dx\,ds\}$, for any $T >
\lambda$.
\end{prop}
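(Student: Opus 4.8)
The plan is to run a De Giorgi / Moser type iteration on nonlinear truncations of $|u|$, localized in time near the window $(\lambda,T)$, choosing the truncation ceiling $M$ of size $1+\int_0^T\int_{\R^3}|u|^6\,dx\,ds$. First I would derive a truncated energy inequality. Fix a level $C\geq0$, set $v=(|u|-C)_+$, and test the momentum equation (\ref{NS}) against $\nabla_u F_C(u)=v\,u/|u|$, where $F_C(u)=\tfrac12\,(|u|-C)_+^2$. The time derivative produces $\frac{d}{dt}\int F_C(u)$; the viscous term yields, by convexity of $F_C$, a nonnegative quadratic form bounded below by $\int|\nabla v|^2$; and, crucially, the convective term vanishes, since $\Div(u\otimes u)\cdot\nabla_u F_C(u)=u\cdot\nabla F_C(u)$ integrates to $-\int(\Div u)\,F_C(u)=0$ by (\ref{incompressibility}). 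What survives is the pressure contribution $-\int p\,\Div\!\big(u\,v/|u|\big)\,dx=-\int p\,u\cdot\nabla(v/|u|)\,dx$, supported on $\{|u|>C\}$. This gives an energy inequality of the form
\begin{equation*}
\frac{d}{dt}\int_{\R^3}F_C(u)\,dx + \int_{\R^3}|\nabla v|^2\,dx \;\leq\; \Big|\int_{\R^3} p\,u\cdot\nabla\big(v/|u|\big)\,dx\Big|.
\end{equation*}

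Next I would set up the iteration. Let $C_k=M(1-2^{-k})\uparrow M$ and $T_k=\lambda(1-2^{-k})\uparrow\lambda$, write $v_k=(|u|-C_k)_+$, and define
\begin{equation*}
U_k = \sup_{T_k\leq t\leq T}\int_{\R^3}\tfrac12\,v_k^2(t,x)\,dx + \int_{T_k}^{T}\!\!\int_{\R^3}|\nabla v_k|^2\,dx\,ds .
\end{equation*}
Integrating the energy inequality from a time $s\in(T_{k-1},T_k)$ up to $t\in(T_k,T)$ and averaging over $s$ converts the initial term into $(T_k-T_{k-1})^{-1}\int_{T_{k-1}}^{T_k}\!\int v_k^2$, so that $U_k$ is controlled by quantities at level $k-1$. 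If I can show a nonlinear recursion $U_k\leq C_0^{\,k}\,M^{-\gamma}\,U_{k-1}^{1+\beta}$ for some $\beta,\gamma>0$ and a universal $C_0$, then choosing $M$ comparable to $1+\int_0^T\int_{\R^3}|u|^6\,dx\,ds$ makes the normalized sequence start below the threshold of the standard recursion lemma, forcing $U_k\to0$. Since $U_\infty=0$ gives $|u|\leq M$ a.e.\ on $(\lambda,T)\times\R^3$ and $T>\lambda$, this yields $\|u(T,\cdot)\|_{L^\infty}\leq M\leq A_\lambda\{1+\int_0^T\int_{\R^3}|u|^6\,dx\,ds\}$.

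The engine of the recursion is twofold. On one hand, the parabolic embedding $L^\infty_t L^2_x\cap L^2_t\dot H^1_x\hookrightarrow L^{10/3}_{t,x}$ shows $\|v_{k-1}\|_{L^{10/3}((T_{k-1},T)\times\R^3)}^2\lesssim U_{k-1}$. On the other, on $\{|u|>C_k\}$ one has $v_{k-1}\geq C_k-C_{k-1}=M2^{-k}$, so the indicator of this set is dominated by $(2^k/M)^{a}\,v_{k-1}^{a}$ for any $a>0$; this is what produces simultaneously the geometric factor $C_0^k$, the negative power $M^{-\gamma}$, and the superlinear exponent $1+\beta$ after Hölder. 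For the pressure term I would use that $p=\sum_{i,j}R_iR_j(u_iu_j)$, so that Calder\'on--Zygmund gives $\|p\|_{L^q}\lesssim\|\,|u|^2\|_{L^q}$; combined with $|\nabla(v/|u|)|\leq C_k^{-1}|\nabla v_k|$ on $\{|u|>C_k\}$, a Cauchy--Schwarz split peels off a factor $U_k^{1/2}$ (absorbed on the left) against an $|u|$-weighted pressure integral that is then closed by Hölder, the embedding above, and the truncation measure estimate.

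I expect the pressure term to be the main obstacle. It is nonlocal, so unlike the convective term it neither vanishes nor localizes cleanly to $\{|u|>C_k\}$; one must balance the Calder\'on--Zygmund loss against the smallness supplied by the indicator and the gaps $C_k-C_{k-1}$, and verify that the resulting exponents genuinely give a superlinear, geometrically weighted recursion that can be started by taking $M$ large. Keeping track of the exact integrability exponents, so that every factor is either absorbed on the left or bounded by $U_{k-1}^{1+\beta}$ with a favorable power of $M$, is the delicate bookkeeping at the heart of the argument.
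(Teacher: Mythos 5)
Your architecture---the truncated energy inequality obtained by testing with $v\,u/|u|$, Chebyshev bounds from the level gaps, the $L^{10/3}$ parabolic embedding, and Calder\'on--Zygmund estimates for the pressure---is exactly the engine the paper runs in Sections 2--5. The genuine gap is in your key quantitative claim: a recursion of the form $U_k\leq C_0^k M^{-\gamma}U_{k-1}^{1+\beta}$ with $\gamma>0$ cannot be produced by these tools, and the pressure term is precisely where it fails. The obstruction is scaling. Under $u_\varepsilon(t,x)=\varepsilon u(\varepsilon^2t,\varepsilon x)$, with the levels scaled to $\varepsilon C_k$, one has $M\mapsto\varepsilon M$, $U_k\mapsto\varepsilon^{-1}U_k$, and the pressure term $\int\!\!\int\frac{v_k}{|u|}u\cdot\nabla P\,dx\,dt$ also scales like $\varepsilon^{-1}$; moreover the constraint $\int\!\!\int|u|^6\leq M$ is preserved by this scaling. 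Every tool you list for the pressure is covariant under the scaling and involves no window lengths, so any bound $|\mathrm{pressure\ term}|\leq C_0^kM^{\mu}U_{k-1}^{\nu}$ derivable from them must itself be scale-invariant, which forces $\mu=\nu-1$: a superlinear exponent $\nu=1+\beta$ comes with $\mu=+\beta>0$, never $-\gamma$. Concretely, redoing the paper's Section 5 estimates at ceiling $M$ gives $\|P_{k1}\|_{L^6}\lesssim M\|u\|_{L^6}\lesssim M^{7/6}$ (the truncated-down part of $u\otimes u$ is bounded by $M|u|$, not $|u|$), $\|\nabla P_{k2}\|_{L^{3/2}}\lesssim\|u\|_{L^6}\|d_k\|_{L^2}\lesssim M^{1/6}U_{k-1}^{1/2}$, and both pressure contributions come out as $C\,2^{ck}M^{+1/18}U_{k-1}^{19/18}$. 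Only the time-averaging term, where the fixed gaps $T_k-T_{k-1}=\lambda2^{-k}$ break scale invariance, carries a negative power ($C\lambda^{-1}2^{7k/3}M^{-4/3}U_{k-1}^{5/3}$). So ``take $M$ large and start below the threshold of the standard lemma'' does not get off the ground: largeness of $M$ produces no smallness in the dominant terms. This is exactly why the paper's Proposition \ref{index} carries the hypothesis $\|u\|_{L^6(Q_0)}\leq A^{-1/6}$: what tames the pressure there is genuine smallness of $\|u\|_{L^6}$, which in your setting is unavailable, since $\|u\|_{L^6}\leq M^{1/6}$ is large.

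The gap is repairable, but the repair is the paper's proof in disguise. Since the obstruction is purely dimensional, normalize: $W_k:=MU_k$ obeys $W_k\leq C\lambda^{-1}2^{7k/3}\bigl(W_{k-1}^{19/18}+W_{k-1}^{5/3}\bigr)$, the powers of $M$ cancelling because $M\cdot M^{1/18}(W/M)^{19/18}=W^{19/18}$ and $M\cdot M^{-4/3}(W/M)^{5/3}=M^{-2}W^{5/3}\leq W^{5/3}$ for $M\geq1$. The standard iteration lemma then applies provided $W_1=MU_1$ lies below a universal threshold; the analogue of Lemma \ref{supp} gives $MU_1\leq C(\lambda)/K$ when $M=K\{1+\int_0^T\!\!\int_{\R^3}|u|^6dx\,ds\}$, so you must take the universal factor $K$ large---``$M$ comparable to $1+\int\!\!\int|u|^6$'' is not enough. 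But $W_k=MU_k$ is precisely the De Giorgi quantity of the rescaled solution $M^{-1}u(M^{-2}t,M^{-1}x)$, so this normalization amounts to performing the Navier--Stokes scaling by hand inside every estimate. The paper organizes the same mathematics more cleanly: it first proves the unit-ceiling statement under $L^6$-smallness (Proposition \ref{first}), where no $M$ appears at all, and then deduces Proposition \ref{third} by pure scaling and time translation---splitting into the cases $\int\!\!\int|u|^6\leq(C^*)^6$ and $>(C^*)^6$, choosing $\varepsilon=(C^*)^6\{2\int\!\!\int|u|^6\}^{-1}$ in the second case, and finishing with one more rescaling to reach all $\lambda\in(0,2)$.
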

Then Theorem \ref{main} follows from a Gronwall argument on
$\|u(t)\|_{L^\infty(\R^3)}$, since:
\begin{eqnarray*}
&&\qquad\|u(t)\|_{L^\infty(\R^3)}\leq
A_\lambda\\
&&+A_\lambda\int_\lambda^t
\|u(s)\|_{L^\infty(\R^3)}\log(1+\|u(s)\|_{L^\infty(\R^3)})\left(\int_{\R^3}\frac{|u(s)|^5}{\log(1+|u(s)|)}\,dx\right)\,ds
\end{eqnarray*}
and the Hypothesis gives that
$\int_{\R^3}\frac{|u(s)|^5}{\log(1+|u(s)|)}\,dx$ lies in
$L^1(0,\infty)$.

Notice that the inequality of Proposition \ref{third} needs to be
invariant by the scaling of the Navier-Stokes equation:
\begin{eqnarray}\label{scaling}
u_\varepsilon(t,x)=\varepsilon u(t_0+\varepsilon^2 t,x_0+\varepsilon
x).
\end{eqnarray}
This is why the $L^6$ norm pops up, since it has the same scaling as
that of the $L^\infty$ norm. Taking advantage of the scaling
(\ref{scaling}), Proposition \ref{third} will follow from the
following rescaled Proposition:
\begin{prop}\label{first}
There exists a universal positive constant $C^*$, such that
for any solution $u$ of the Navier-Stokes equation on $[-1,1]\times \mathbb{R}^3$ satisfying
$\|u\|_{L^6(\mathbb{R}^3\times [-1,1])} \leqslant C^*$, we have $|u| \leqslant 1$ almost
everywhere on $[-\frac{1}{2}, 1]\times \mathbb{R}^3$.
\end{prop}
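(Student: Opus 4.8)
The plan is to establish Proposition~\ref{first} by a De~Giorgi type nonlinear energy iteration carried out on a sequence of truncations of $|u|$, localized in time only, since the spatial domain is already all of $\R^3$. First I would fix increasing truncation levels $C_k = 1 - 2^{-k-1}$, so that $C_0 = \tfrac12$ and $C_k \uparrow 1$, together with a decreasing sequence of times $T_k$ running from $T_0 = -1$ down to $-\tfrac12$. To each level I associate the scalar truncation $v_k = (|u| - C_k)_+$ and the energy
\[
U_k = \operatorname*{ess\,sup}_{t \in [T_k,1]} \int_{\R^3} v_k^2 \, dx \; + \; \int_{T_k}^{1}\!\!\int_{\R^3} \mathbf{1}_{\{|u| > C_k\}}\,|\nabla u|^2 \, dx\,dt .
\]
The target is the nonlinear recurrence $U_k \leq C^k\,U_{k-1}^{1+\beta}$ for universal constants $C>0$ and $\beta>0$; a standard fast-convergence lemma then forces $U_k \to 0$ as soon as the base energy $U_0$ lies below a threshold $C^{-1/\beta^2}$. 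Since $U_k \to 0$ gives $\operatorname{ess\,sup}_t \int v_k^2 \to 0$ while $v_k \uparrow (|u|-1)_+$, this yields $(|u|-1)_+ = 0$ almost everywhere on $[-\tfrac12,1]\times\R^3$, i.e.\ $|u| \leq 1$.

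Second, I would derive the localized energy inequality controlling $U_k$. Testing the momentum equation against $u\,\mathbf 1_{\{|u|>C_k\}}\phi_k$, with a temporal cut-off $\phi_k$ that vanishes for $t \leq T_{k-1}$, equals $1$ for $t \geq T_k$ and obeys $|\phi_k'| \lesssim 2^k$, three features appear. The viscous term produces the good dissipation $\int\!\int \mathbf 1_{\{|u|>C_k\}}|\nabla u|^2$ plus a nonnegative level-set term that is discarded. The convection term $\int (u\cdot\nabla u)\cdot u\,\mathbf 1_{\{|u|>C_k\}}$ \emph{vanishes identically}: it is the transport of the scalar $|u|^2$ by the divergence-free field $u$, so writing it as $\tfrac12\int u\cdot\nabla\Phi(|u|^2)$ with $\Phi(s)=(s-C_k^2)_+$ and integrating by parts reduces it to $-\tfrac12\int \Div(u)\,\Phi(|u|^2) = 0$. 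The only surviving forcing terms are the cut-off term $\int\!\int v_k^2\,|\phi_k'|$ and the pressure term, and the inequality reads, schematically,
\[
U_k \; \lesssim \; 2^k \int_{T_{k-1}}^1\!\!\int_{\{|u|>C_k\}} v_k^2 \, dx\,dt \; + \; (\text{pressure}).
\]

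Third, I would convert the elementary right-hand side into the super-linear recurrence. The parabolic Sobolev (Ladyzhenskaya) inequality gives $\int\!\int v_k^{10/3} \lesssim (\sup_t \int v_k^2)^{2/3}\int\!\int|\nabla v_k|^2 \le U_k^{5/3}$, and since $|\nabla v_k|^2 \le \mathbf 1_{\{|u|>C_k\}}|\nabla u|^2$ this quantity is indeed controlled by $U_k$. On $\{|u|>C_k\}$ one has $v_{k-1} > C_k - C_{k-1} = 2^{-k-1}$, so by Chebyshev $\mathbf 1_{\{|u|>C_k\}} \le (2^{k+1}v_{k-1})^{4/3}$ and the measure of the super-level set over $[T_{k-1},1]$ is $\lesssim C^k U_{k-1}^{5/3}$. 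Using $v_k \le v_{k-1}$, the cut-off term is bounded by $2^k\int\!\int v_{k-1}^2(2^{k+1}v_{k-1})^{4/3} \lesssim C^k \int\!\int v_{k-1}^{10/3} \lesssim C^k U_{k-1}^{5/3}$, the desired bound with $\beta = \tfrac23$. The base energy $U_0$ is handled by the same inequality at level $C_0 = \tfrac12$: its cut-off term is $\lesssim \int\!\int_{\{|u|>1/2\}} |u|^2 \le 2^4 \int\!\int |u|^6 \le 2^4 (C^*)^6$, which is small, so the hypothesis $\|u\|_{L^6}\le C^*$ pins $U_0$ below the iteration threshold.

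The main obstacle is the pressure term, the only genuinely nonlocal contribution. After an integration by parts it becomes $\int\!\int p\,\Div(u_k)\,\phi_k$, where $u_k$ is the vector truncation that keeps the direction of $u$, so that $\Div(u_k)$ is supported on $\{|u|>C_k\}$ with $|\Div(u_k)| \le |\nabla u|$. By Calder\'on--Zygmund one has $\|p\|_{L^3(\R^3\times[-1,1])} \le C\|u\|_{L^6}^2 \le C(C^*)^2$, so the pressure is globally small in $L^3$; distributing by H\"older as $\|p\|_{L^3}\,\|\nabla u\,\mathbf 1\|_{L^2}\,|\{|u|>C_k\}|^{1/6}$ and absorbing the dissipation into $U_k$ produces a contribution controlled by $(C^*)^{2}$ times a power of $U_{k-1}$. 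The delicate point is that a naive use of the fixed constant $\|p\|_{L^3}$ yields only a \emph{sub}-linear power of $U_{k-1}$, which for the base case merely gives smallness (and is enough) but in the inductive step does not close the recurrence. The nonlocality must instead be exploited by splitting $p = R_iR_j\big((u_iu_j)\mathbf 1_{\{|u|>C_{k-1}\}}\big) + R_iR_j\big((u_iu_j)\mathbf 1_{\{|u|\le C_{k-1}\}}\big)$, so that the first piece is small in $L^{3/2}$ with smallness \emph{tied to the super-level measure} (hence to $U_{k-1}$), while the second, whose source is pointwise bounded by $C_{k-1}^2 \le 1$, is estimated through the small space-time measure of the support of $\Div(u_k)$. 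Making this split produce a genuine super-linear gain, uniformly in $k$, is the technical heart of the argument, and I expect it to be the step demanding the most care.
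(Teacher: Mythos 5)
Your overall strategy is exactly the paper's: a De~Giorgi iteration on truncations $v_k$ of $|u|$, time-localized energies, the parabolic Sobolev embedding into $L^{10/3}_{t,x}$ plus Chebyshev to generate a super-linear recurrence, a fast-convergence lemma, and smallness of the base energy from $\|u\|_{L^6}\leq C^*$. Your treatment of the convection term, the cut-off term, and the base case all match the paper and are sound. The genuine gap is precisely in the step you postpone: the pressure term in the inductive step, and the specific decomposition you propose cannot be made to close. After integration by parts that term is $\int\!\!\int p\,\Div(u_k)$ with $|\Div(u_k)|\lesssim d_k\chi_{\{v_k>0\}}$ and $d_k$ controlled only in $L^2_{t,x}$. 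For your first piece $p_1=\sum R_iR_j\big(u_iu_j\chi_{\{|u|>C_{k-1}\}}\big)$, the source is pointwise $\lesssim v_{k-1}^2+\chi_{\{v_{k-1}>0\}}$, and since the iteration controls $v_{k-1}$ only in $L^\infty_tL^2_x\cap L^2_tL^6_x\hookrightarrow L^{10/3}_{t,x}$, the quantity $v_{k-1}^2$, hence $p_1$, is controlled at best in $L^{5/3}_{t,x}$ (equivalently, at best $L^3_x$ at a.e.\ time but with only $L^1_t$ integrability). No H\"older pairing of such a pressure against the $L^2_{t,x}$ dissipation can work, since $\tfrac35+\tfrac12>1$. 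The only escape is to put the derivative on the pressure and pair $\nabla p_1\in L^{3/2}$ against $v_k\in L^{10/3}$ and $\chi_{\{v_k>0\}}\in L^{30}$ (exponents $\tfrac23+\tfrac{3}{10}+\tfrac1{30}=1$); but $\nabla p_1=\sum R_iR_j\nabla\big(u_iu_j\chi_{\{|u|>C_{k-1}\}}\big)$, and differentiating your sharp characteristic-function cutoff produces a surface measure on the level set $\{|u|=C_{k-1}\}$, which lies in no $L^q$ and to which Calder\'on--Zygmund theory does not apply. So the step you flag as ``demanding the most care'' is not merely delicate: with this split it fails.

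The paper's resolution is to decompose the pressure source with the \emph{Lipschitz} vector truncation rather than a sharp cutoff: writing $u=(1-\frac{v_k}{|u|})u+\frac{v_k}{|u|}u$, it sets $-\triangle P_{k2}=\sum\partial_i\partial_j\{\frac{v_k}{|u|}u_i\frac{v_k}{|u|}u_j\}$ and puts the bounded-bounded and cross terms into $P_{k1}$. The $P_{k1}$ source is pointwise $\leq 3|u|$ (one factor is always bounded by $1$), so $\|P_{k1}\|_{L^6}\lesssim\|u\|_{L^6}$ and it pairs with $d_k\in L^2$ and $\chi_{\{v_k>0\}}\in L^3$ to give $C2^{10k/9}U_{k-1}^{19/18}$; this is morally your second piece, and that part of your argument is fine. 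The crucial gain is in the truncated-truncated piece: its source has magnitude $\leq v_k^2$ \emph{and is differentiable}, with $|\nabla(\frac{v_k}{|u|}u_i\frac{v_k}{|u|}u_j)|\leq 6v_kd_k\in L^{3/2}$, whence $\|\nabla P_{k2}\|_{L^{3/2}}\lesssim(\int v_k^6)^{1/6}\|d_k\|_{L^2}\lesssim\|u\|_{L^6}U_{k-1}^{1/2}$, and the pairing above yields $C2^{k/9}U_{k-1}^{19/18}$, the super-linear bound. Replacing your characteristic-function split by this Lipschitz split is the missing idea that makes your outline a proof.
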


The proof of proposition~\ref{first} is in the same spirit as the
proof given by A. Vasseur \cite{Vasseur}. It relies on a method
first introduced by De Giorgi to show regularity of solutions to
elliptic equations with rough diffusion
coefficients~\cite{DeGiorgi}. In this paper, the proof of
proposition~\ref{first} is established though sections 2, 3, 4 and
5.  In section 6, we will deduce proposition~\ref{third} from
proposition~\ref{first}. Finally, in the last section of this paper,
we will use the conclusion of proposition~\ref{third}, together with
the fundamental result of Serrin~\cite{Serrin}, to obtain the result
of Theorem~\ref{main}.

\section{Basic setting of the whole paper}

In order to prove proposition~\ref{first}, we would like to introduce some notation
first. Then, we will state two lemmas and one proposition which are related to the proof
of proposition~\ref{first}. So, let us fix our notation as follow.
\begin{itemize}
\item for each $k \geqslant 0$, let $Q_{k} = [T_{k} , 1]\times \mathbb{R}^3$, in which
$T_{k} = -\frac{1}{2} (1 + \frac{1}{2^k})$.

\item for each $k \geqslant 0$, let $v_{k} = \{ |u| - (1 - \frac{1}{2^k}) \}_{+}$.
\item for each $k \geqslant 0$, let $d_{k} = \frac{( 1 - \frac{1}{2^k} )}{|u|}
\chi_{\{|u| \geqslant (1- \frac{1}{2^k})\}} |\nabla |u||^{2} + \frac{v_{k}}{|u|} |\nabla
u|^2$.
\item for each $k \geqslant 0$, let $U_{k} = \frac{1}{2}\|v_{k}\|^{2}_{L^{\infty}(T_{k} , 1 ;
L^{2}(\mathbb{R}^3))} + \int_{T_{k}}^{1}\int_{\mathbb{R}^{3}} d_{k}^2 dx\,dt$.
\end{itemize}
With the above setting, we are now ready to state the lemmas and
proposition which are related to proposition~\ref{first} as follow.

\begin{lem}\label{supp}
For any solution $u$ of the Navier-Stokes equation on $[-1,1]\times
\mathbb{R}^3$ satisfying $\|u\|_{L^{6}(Q_{0})} \leqslant 1$, we have
$U_{1} \leqslant A \|u\|^{6}_{L^{6}(Q_{0})}$, in which $A$ is some
universal constant strictly greater than $1$.
\end{lem}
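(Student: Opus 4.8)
The plan is to derive a local energy inequality for $v_1=\{|u|-\tfrac12\}_+$ and then run a De Giorgi--type energy bootstrap, the only delicate point being the nonlocal pressure contribution. First I would produce the equation for the scalar $|u|$: dotting (\ref{NS}) with $u$ gives the local energy identity $\partial_t\frac{|u|^2}2-\Delta\frac{|u|^2}2+|\nabla u|^2+\Div(\frac{|u|^2}2 u)+\Div(pu)=0$, and rewriting it in terms of $f=|u|$ and dividing by $|u|$ (justifying the step on $\{|u|=0\}$ by Kato's inequality, so that $|u|$ is a subsolution) yields $\partial_t|u|-\Delta|u|+\frac{|\nabla u|^2-|\nabla|u||^2}{|u|}+u\cdot\nabla|u|+\frac{\Div(pu)}{|u|}\le0$. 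Testing this against $v_1$ and integrating in space, the transport term drops by $\Div u=0$, while the diffusion together with the excess-dissipation term combine exactly into $d_1$; thus for a.e. $t$ one gets $\frac{d}{dt}\int_{\R^3}\frac{v_1^2}2\,dx+\int_{\R^3}d_1\,dx+\int_{\R^3}\frac{v_1}{|u|}\Div(pu)\,dx\le0$.

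Second, since we control $v_1$ only on $Q_1=[T_1,1]$ while having no information at $t=T_1$, I would introduce a smooth temporal cutoff $\psi$ with $\psi\equiv0$ for $t\le T_0=-1$, $\psi\equiv1$ for $t\ge T_1=-\tfrac34$, and $|\psi'|\le C$. Multiplying the inequality by $\psi$, integrating over $[T_0,t]$ and integrating the time derivative by parts (the boundary term at $T_0$ vanishing), the nonnegativity of $d_1$ gives, after taking suprema in $t\in[T_1,1]$ and evaluating at $t=1$ respectively, a bound of the shape $U_1\le C\int_{Q_0}v_1^2\,dx\,dt+C\,|\mathcal P|$, where $\mathcal P$ is the space--time pressure integral. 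The first term is where the De Giorgi gain appears: on $\mathrm{supp}\,v_1=\{|u|>\tfrac12\}$ one has $v_1\le|u|$ and $1\le(2|u|)^4$, so $v_1^2\le16|u|^6$ and hence $\int_{Q_0}v_1^2\,dx\,dt\le16\|u\|_{L^6(Q_0)}^6$.

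The hard part is the pressure integral $\mathcal P=\int_{T_0}^1\psi\int_{\R^3}\frac{v_1}{|u|}\Div(pu)\,dx\,dt$. Using $\Div u=0$ I would integrate by parts in space so the derivative falls on $\frac{v_1}{|u|}$; since $\nabla\!\left(\frac{v_1}{|u|}\right)=\frac1{2|u|^2}\nabla|u|\,\chi_{\{|u|>1/2\}}$ and $|u\cdot\nabla|u||\le|u|\,|\nabla|u||$, this reduces $\mathcal P$ to a multiple of $\int|p|\,\frac{|\nabla|u||}{|u|}\chi_{\{|u|>1/2\}}$. The first term of $d_1$ gives $\frac{|\nabla|u||^2}{|u|}\chi\le 2d_1$, and $|u|>\tfrac12$ then yields $\frac{|\nabla|u||}{|u|}\chi\le C\sqrt{d_1}$, so by Young's inequality $\mathcal P\le\frac12\int_{Q_0}d_1\,dx\,dt+C\int_{Q_0}|p|^2\chi_{\{|u|>1/2\}}\,dx\,dt$, the first piece being absorbed into $U_1$. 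It remains to bound $\int_{Q_0}|p|^2\chi_{\{|u|>1/2\}}$: by Calder\'on--Zygmund, $p=\sum_{i,j}R_iR_j(u_iu_j)$ gives $\|p(t)\|_{L^3}\le C\|u(t)\|_{L^6}^2$, hence $\|p\|_{L^3(Q_0)}^3\le C\|u\|_{L^6(Q_0)}^6$; combining H\"older in space ($L^3\times L^3$) and in time ($\tfrac32\times3$) with $|\{|u|>\tfrac12\}|\le 2^6\int|u|^6$ produces $\int_{Q_0}|p|^2\chi_{\{|u|>1/2\}}\le C\|u\|_{L^6(Q_0)}^6$. Collecting the three estimates and using $\|u\|_{L^6(Q_0)}\le1$ to absorb the remaining lower powers gives $U_1\le A\|u\|_{L^6(Q_0)}^6$ with a universal $A>1$. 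I expect the pressure term to be the main obstacle, precisely because its nonlocality forces the use of the $L^3$ Calder\'on--Zygmund bound together with the smallness of the super-level set $\{|u|>\tfrac12\}$ to close the estimate against the dissipation.
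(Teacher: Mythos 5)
Your proposal is correct and follows essentially the same route as the paper: the level-set energy inequality for $v_1$ with dissipation controlled by $d_1$, the pressure term handled by integration by parts, Young's inequality to absorb it into the dissipation, and the Calder\'on--Zygmund bound $\|p\|_{L^3}\leqslant C\|u\|_{L^6}^2$ combined with the Chebyshev estimate on the level set $\{|u|>\tfrac12\}$. The only deviations are cosmetic: you use a smooth temporal cutoff where the paper averages over $\sigma\in[T_0,T_1]$, you place the spatial derivative on the scalar $\tfrac{v_1}{|u|}$ via $\Div u=0$ where the paper uses the pointwise bound $|\nabla(\tfrac{v_1}{|u|}u)|\leqslant 3d_1$, and you bound $\int_{Q_0}v_1^2$ by a direct pointwise inequality rather than H\"older plus Chebyshev (both give the same constant $2^4$).
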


\begin{prop}\label{index}
There exists some universal constants $B$, $\beta > 1$, such that
for any solution $u$ of the Navier-Stokes equation on $[-1,1]\times
\mathbb{R}^3$ satisfying $\|u\|_{L^{6}(Q_{0})} \leqslant
\frac{1}{A^{\frac{1}{6}}}$, we have $U_{k} \leqslant B^k
U^{\beta}_{k-1}$, for all $k\geqslant 1$. Here, $A$ is the universal
constant appearing in Lemma~\ref{supp} .
\end{prop}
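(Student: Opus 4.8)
The plan is to run a De Giorgi iteration, extracting the recursion from a single local energy inequality for the truncations $v_k$. First I would write the equation satisfied by $\frac{|u|^2}{2}$, obtained by taking the scalar product of (\ref{NS}) with $u$ and using (\ref{incompressibility}):
\[
\partial_t \tfrac{|u|^2}{2} - \Delta \tfrac{|u|^2}{2} + |\nabla u|^2 + u\cdot\nabla\tfrac{|u|^2}{2} + \Div(pu) = 0.
\]
Testing this against the nonlinear test function $\frac{v_k}{|u|}$, which is supported exactly on $\{|u| > 1-2^{-k}\}$, and integrating over $\mathbb{R}^3\times[t_1,t_2]$ with $T_k \le t_1 < t_2 \le 1$, the two diffusion contributions assemble precisely into the density $d_k$: the term $\frac{v_k}{|u|}|\nabla u|^2$ comes from the dissipation $|\nabla u|^2$, and $\frac{1-2^{-k}}{|u|}\chi|\nabla|u||^2$ arises from integrating $-\Delta\frac{|u|^2}{2}$ by parts against $\nabla\frac{v_k}{|u|} = (1-2^{-k})\frac{\nabla|u|}{|u|^2}\chi$. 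The transport term drops out entirely: since $v_k\nabla|u| = \nabla\frac{v_k^2}{2}$, one has $\int \frac{v_k}{|u|}\,u\cdot\nabla\frac{|u|^2}{2}\,dx = \int u\cdot\nabla\frac{v_k^2}{2}\,dx = 0$ by incompressibility. This leaves a local energy inequality controlling $U_k$ by an initial-slice term $\frac12\int v_k^2(t_1)\,dx$ plus the pressure term.

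Second, I would dispose of the initial-slice term by the standard averaging trick: choosing $t_1$ to minimize $\int v_k^2(t_1)\,dx$ over the interval $[T_{k-1},T_k]$, whose length is comparable to $2^{-k}$, yields $\int v_k^2(t_1)\,dx \le C\,2^k\int_{Q_{k-1}} v_k^2\,dx\,dt$, at the cost of one factor of $2^k$. Everything then reduces to estimating space--time integrals of $v_k^2$ (and, for the pressure, of $p^2$) against quantities at level $k-1$.

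Third comes the engine of the iteration, the gain of integrability. The key geometric fact is that $v_k > 0$ forces $|u| > 1-2^{-k}$, whence $v_{k-1} = |u| - (1-2^{-(k-1)}) > 2^{-k}$ there; consequently $\chi_{\{v_k>0\}} \le (2^k v_{k-1})^{4/3}$. Combined with $v_k \le v_{k-1}$ and the parabolic embedding $L^\infty_t L^2_x \cap L^2_t \dot H^1_x \hookrightarrow L^{10/3}_{t,x}$, which gives $\|v_{k-1}\|_{L^{10/3}(Q_{k-1})}^{10/3} \le C\,U_{k-1}^{5/3}$, this produces
\[
\int_{Q_{k-1}} v_k^2\,dx\,dt \le 2^{4k/3}\int_{Q_{k-1}} v_{k-1}^{10/3}\,dx\,dt \le C\,2^{4k/3}\, U_{k-1}^{5/3},
\]
an estimate with exponent $5/3 > 1$. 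This is exactly the mechanism that generates both the geometric factor $B^k$ and the superlinear power $\beta > 1$.

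The hard part will be the pressure term, precisely because the pressure is nonlocal. After Cauchy--Schwarz I would split off the dissipation so as to absorb a small multiple of $U_k$ into the left-hand side, reducing matters to bounding $\int\int \frac{p^2}{|u|}\chi_{\{|u|>1-2^{-k}\}}$; since $|u| \ge \frac12$ on this set (for $k\ge 1$), it is comparable to $\int\int p^2\chi$. I would then invoke the representation $p = R_iR_j(u_iu_j)$ together with the Calder\'{o}n--Zygmund bound $\|p\|_{L^r}\lesssim\|u\|_{L^{2r}}^2$ to turn this into a norm of $u$, and recover a power larger than $1$ by the same active-set smallness $\chi_{\{|u|>1-2^{-k}\}}\le(2^k v_{k-1})^s$, using the global control of $\|u\|_{L^6(Q_0)}$ (which is where the hypothesis $\|u\|_{L^6(Q_0)}\le A^{-1/6}$ enters, via $U_0 \lesssim \|u\|_{L^6}^6$). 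The delicate point is that $p$ cannot be restricted to the active region directly, so the Riesz transforms must be estimated globally while the $k$-gain is extracted from the $v_{k-1}$ weight; once this is carried out, collecting all contributions yields $U_k \le B^k U_{k-1}^\beta$ with universal constants $B$ and $\beta > 1$.
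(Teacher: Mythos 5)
Your first three steps (the level-set energy inequality obtained by testing with $\frac{v_k}{|u|}u$, the averaging in $\sigma\in[T_{k-1},T_k]$, and the bound $\int_{Q_{k-1}}v_k^2\lesssim 2^{4k/3}U_{k-1}^{5/3}$ via $\chi_{\{v_k>0\}}\le(2^kv_{k-1})^{4/3}$ and the parabolic embedding) coincide with the paper's argument. The genuine gap is in the pressure term, and as you have set it up it is fatal. If you absorb the dissipation by Young's inequality, you are left with $\int\int p^2\chi_{\{v_k>0\}}$, and the only global control available on $p$ is Calder\'on--Zygmund from $u\in L^6$: $\|p\|_{L^3(Q_{k-1})}\lesssim\|u\|^2_{L^6(Q_0)}$, a constant carrying no power of $U_{k-1}$. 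All the smallness must then come from the active-set weight, and since $v_{k-1}$ is controlled (in terms of $U_{k-1}$) only up to $L^{10/3}$, the best possible outcome is
\begin{equation*}
\int_{Q_{k-1}}p^2\chi_{\{v_k>0\}}\le 2^{\frac{10k}{9}}\,\|p\|^2_{L^3(Q_{k-1})}\,\|v_{k-1}\|^{\frac{10}{9}}_{L^{10/3}(Q_{k-1})}\lesssim 2^{\frac{10k}{9}}\,U_{k-1}^{\frac{5}{9}},
\end{equation*}
and $\tfrac59<1$; no other choice of the exponent $s$ in $\chi\le(2^kv_{k-1})^s$ does better, because raising $s$ forces you to interpolate $v_{k-1}$ toward $L^6$, where you only have the constant bound $\|v_{k-1}\|_{L^6}\le\|u\|_{L^6}$, not a power of $U_{k-1}$. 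Even if you decline to absorb and keep the dissipation factor, H\"older gives $\|d_k\|_{L^2}\|p\|_{L^3}\|\chi_{\{v_k>0\}}\|_{L^6}\lesssim U_{k-1}^{1/2}\cdot U_{k-1}^{5/18}=U_{k-1}^{7/9}$, still strictly below $1$. So with the pressure treated as a single global object the recursion closes only with $\beta\le\tfrac79$, which is useless for the De Giorgi scheme.

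The missing idea --- and the heart of the paper's proof --- is a $k$-dependent decomposition of the pressure adapted to the truncation: write $u=(1-\frac{v_k}{|u|})u+\frac{v_k}{|u|}u$ and split $P=P_{k1}+P_{k2}$, where $-\Delta P_{k1}$ has source $(1-\frac{v_k}{|u|})u_i(1-\frac{v_k}{|u|})u_j+2(1-\frac{v_k}{|u|})u_i\frac{v_k}{|u|}u_j$ and $-\Delta P_{k2}=\sum\partial_i\partial_j\bigl(\frac{v_k}{|u|}u_i\frac{v_k}{|u|}u_j\bigr)$. The source of $P_{k1}$ is bounded pointwise by $3|u|$ (linear, not quadratic, in $|u|$), so $\|P_{k1}\|_{L^6(Q_{k-1})}\lesssim\|u\|_{L^6(Q_0)}\le 1$: this improved integrability ($L^6$ rather than $L^3$) permits the pairing $\|d_k\|_{L^2}\|P_{k1}\|_{L^6}\|\chi_{\{v_k>0\}}\|_{L^3}\lesssim U_{k-1}^{1/2}\cdot 2^{10k/9}U_{k-1}^{5/9}=2^{10k/9}U_{k-1}^{19/18}$. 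The source of $P_{k2}$ is quadratic in the truncation, so it carries its own smallness: $\|\nabla P_{k2}\|_{L^{3/2}}\lesssim\|v_kd_k\|_{L^{3/2}}\le\|v_k\|_{L^6}\|d_k\|_{L^2}\lesssim U_{k-1}^{1/2}$, and pairing with $\|v_k\|_{L^{10/3}}\|\chi_{\{v_k>0\}}\|_{L^{30}}$ again yields $2^{k/9}U_{k-1}^{19/18}$. In both halves the exponent exceeds $1$ precisely because a factor $U_{k-1}^{1/2}$ (from $d_k$, respectively $v_kd_k$) survives alongside the gain from the characteristic function; your plan, which trades the $d_k$ factor away and estimates $p$ globally, cannot produce it. A minor additional point: your parenthetical claim $U_0\lesssim\|u\|_{L^6}^6$ is not meaningful here, since $U_0$ involves $\||u|\|_{L^\infty_tL^2_x}$ and $\int|\nabla u|^2$, which are not controlled by the $L^6$ norm at all; the paper's hierarchy starts at $U_1$, via Lemma~2.1.
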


Let us first show that  Lemma \ref{supp} and Proposition \ref{index}
provide the result of Proposition \ref{first}.First  we  show that
the sequence $U_k$ converges to 0 when $k$ goes to infinity. We can
use for instance the following easy lemma (see \cite{Vasseur}):
\begin{lem}\label{Vass}
For any given constants $B$, $\beta > 1$, there exists some constant
$C^*_{0}$ such that for any sequence $\{a_{k}\}_{k\geqslant 1}$
satisfying $0 < a_{1} \leq C^*_{0}$ and $a_{k} \leqslant B^k
a^{\beta}_{k-1}$, for any $k \geqslant 1$, we have
$lim_{k\rightarrow \infty} a_{k} = 0$ .
\end{lem}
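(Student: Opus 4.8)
The plan is to prove a quantitative geometric-decay bound by induction, which is the standard way to exploit a superlinear recurrence. Since $\beta>1$, set $\alpha=\beta-1>0$ and introduce the ratio
$$
r=B^{-1/(\beta-1)},
$$
which lies in $(0,1)$ because $B>1$. The guiding ansatz I would carry through the induction is
$$
a_k\leqslant C^*_0\,r^{\,k-1}\qquad\text{for all }k\geqslant 1,
$$
which immediately forces $a_k\to 0$ as $k\to\infty$. The content of the lemma is then entirely in choosing $C^*_0$ small enough that this ansatz is self-propagating under the hypothesis $a_k\leqslant B^k a_{k-1}^{\beta}$.

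First I would check the base case $k=1$: by hypothesis $a_1\leqslant C^*_0=C^*_0\,r^0$, so the ansatz holds at $k=1$ for free. For the inductive step I assume $a_{k-1}\leqslant C^*_0\,r^{\,k-2}$ and substitute into the recurrence to get
$$
a_k\leqslant B^k a_{k-1}^{\beta}\leqslant B^k\bigl(C^*_0\,r^{\,k-2}\bigr)^{\beta}=\bigl(B\,r^{\beta-1}\bigr)^{k}\,(C^*_0)^{\beta}\,r^{-2\beta}.
$$
The decisive algebraic point is that the choice $r=B^{-1/(\beta-1)}$ makes $B\,r^{\beta-1}=1$, so the entire $k$-dependent factor collapses to $1$; this is exactly the cancellation that tames the growing weights $B^k$. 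What remains is a $k$-independent inequality, and a short computation shows that $a_k\leqslant C^*_0\,r^{\,k-1}$ will follow provided $(C^*_0)^{\beta-1}r^{-2\beta+1}\leqslant 1$, i.e. provided
$$
C^*_0\leqslant B^{-(2\beta-1)/(\beta-1)^2}.
$$
I would simply define $C^*_0$ to be this explicit threshold (or any smaller positive number), which depends only on the given constants $B$ and $\beta$, as required.

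With $C^*_0$ fixed in this way the induction closes, yielding $0<a_k\leqslant C^*_0\,r^{\,k-1}$ for every $k\geqslant 1$, and since $0<r<1$ the right-hand side tends to $0$, giving $\lim_{k\to\infty}a_k=0$. I do not expect any genuine obstacle here: once the geometric ansatz is guessed, the proof is a one-line induction. The only subtlety worth flagging is bookkeeping, namely that the correct smallness threshold involves the doubly-reciprocal exponent $1/(\beta-1)^2$ rather than $1/(\beta-1)$, which is precisely what is needed to absorb the factor $r^{-2\beta}$ coming from the $\beta$-th power of $r^{k-2}$; choosing $C^*_0$ too large would break the inductive step even though the ratio $r$ is correct.
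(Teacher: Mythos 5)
Your overall strategy --- the geometric-decay ansatz $a_k \leqslant C^*_0\, r^{k-1}$ with $r = B^{-1/(\beta-1)}$, propagated by induction after choosing $C^*_0$ small --- is the standard proof of this De Giorgi-type iteration lemma, and your final smallness threshold $C^*_0 \leqslant B^{-(2\beta-1)/(\beta-1)^2}$ is the correct one. (For comparison: the paper does not prove Lemma~\ref{Vass} at all, it simply cites \cite{Vasseur}, so a self-contained argument of exactly this type is what is implicitly being invoked.)

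However, there is an algebra slip in your key display, and as written the induction does not close. You claim
\begin{equation*}
B^k\bigl(C^*_0\, r^{k-2}\bigr)^{\beta}=\bigl(B\,r^{\beta-1}\bigr)^{k}\,(C^*_0)^{\beta}\,r^{-2\beta},
\end{equation*}
but expanding the right-hand side gives $B^k (C^*_0)^{\beta} r^{k\beta - k - 2\beta}$, whereas the left-hand side is $B^k (C^*_0)^{\beta} r^{k\beta - 2\beta}$: a factor $r^{k}$ has been dropped. The correct factorization is
\begin{equation*}
B^k\bigl(C^*_0\, r^{k-2}\bigr)^{\beta}=\bigl(B\,r^{\beta-1}\bigr)^{k}\,(C^*_0)^{\beta}\,r^{\,k-2\beta}.
\end{equation*}
This matters because your guiding remark that ``the entire $k$-dependent factor collapses to $1$'' is precisely what must \emph{not} happen: after using $B r^{\beta-1}=1$, your displayed bound becomes the $k$-independent constant $(C^*_0)^{\beta} r^{-2\beta}$ (the equality is false, though as an inequality it is still true since $r^k\leqslant 1$), and no fixed $C^*_0>0$ can make a $k$-independent positive constant $\leqslant C^*_0\, r^{k-1}$ for all $k$, because the right side tends to $0$. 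What actually survives the collapse is the factor $r^{k}$, giving $a_k \leqslant (C^*_0)^{\beta} r^{\,k-2\beta}$, and it is this surviving geometric factor that reproduces the ansatz: $a_k\leqslant C^*_0\,r^{k-1}$ then holds if and only if $(C^*_0)^{\beta-1} r^{\,1-2\beta}\leqslant 1$, which is exactly the ($k$-independent) condition you state. So your threshold is right and the proof is repaired by restoring the missing $r^{k}$; but the chain of estimates as displayed does not support the conclusion you draw from it.
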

Indeed, let $B$, $\beta > 1$ be the constants occurring in
proposition~\ref{index}, and let $C^*_{0}$ be the constant
associated to $B$, $\beta$ in the sense of lemma~\ref{Vass}. Now,
take $C^* = min \{\frac{1}{A^{\frac{1}{6}}} ,
(\frac{C^*_{0}}{A})^{\frac{1}{6}}\}$, in which $A$ is the universal
constant appearing in Lemma~\ref{supp}. Then, for any solution $u$
of the Navier-Stokes system on $[-1,1]\times \mathbb{R}^3$
satisfying $\|u\|_{L^6(Q_{0})} \leqslant C^*$, we have
$\|u\|_{L^6(Q_{0})} \leqslant (\frac{1}{A})^{\frac{1}{6}}$. Hence,
proposition~\ref{index} tells us that $U_{k} \leqslant B^k
U^{\beta}_{k-1}$, for all $k\geqslant 1$ must be valid. On the other
hand, Since $\|u\|_{L^6(Q_{0})} \leqslant C^* \leqslant 1$,
Lemma~\ref{supp} also implies that $U_{1} \leqslant A
\|u\|^6_{L^6(Q_{0})} \leqslant C^*_{0}$. Hence, it follows from
Lemma~\ref{Vass} that $lim_{k\rightarrow \infty} U_{k}= 0 $.
However, since we have the inequality
\begin{displaymath}
\frac{1}{2}\int_{\mathbb{R}^3}\{|u(t,x)| - 1\}^2_{+} dx \leqslant  \frac{1}{2}sup_{t\in [-\frac{1}{2} ,
1]}\int_{\mathbb{R}^3} v_{k}^2 dx \leqslant U_{k} ,
\end{displaymath}

 for every $t\in [-\frac{1}{2} , 1]$. As a result, $lim_{k\rightarrow \infty}U_{k} = 0$ immediately implies that $|u| \leqslant 1$ almost
everywhere on $[-\frac{1}{2}, 1]\times \mathbb{R}^3$. This gives the
result of Proposition \ref{first}.

\section{proof of lemma 2.1}
In this section, we will devote our effort in proving
Lemma~\ref{supp}. Let us recall that the Navier-Stokes equation on
$(-\infty, \infty)\times \mathbb{R}^3$ is
\begin{displaymath}
\partial_{t} u -\triangle u + div(u\otimes u) + \nabla P = 0 ,
\end{displaymath}
 together with the divergence free condition $div(u) = 0$. Now, by multiplying  the above equation by the
term $\frac{v_{1}}{|u|} u$, we yield the following inequality, which is valid in the sense of distribution.
\begin{displaymath}
\partial_{t} (\frac{1}{2} v_{1}^2) + d_{1}^2 - \triangle (\frac{1}{2} v_{1}^2) + div(\frac{v_{1}^2}{2} u) +
\frac{v_{1}}{|u|} u \nabla P \leqslant 0 .
\end{displaymath}
Consider now the variables $\sigma$, $t$ with $T_{0} \leqslant
\sigma \leqslant T_{1} \leqslant t \leqslant 1$, where $T_{0} = -1$,
and $T_{1} = -\frac{1}{2}(1+\frac{1}{2})$. We mention that we have
the following , which is valid in the sense of distribution.
\begin{itemize}
\item $\int_{\sigma}^{t} \int_{\mathbb{R}^3} \partial_{t} (\frac{1}{2} v_{1}^2) dx\,ds =
\frac{1}{2}\int_{\mathbb{R}^3} v_{1}^2 (t,x) dx  - \frac{1}{2} \int_{\mathbb{R}^3} v_{1}^2 (\sigma ,x) dx$.
\item $\int_{\sigma}^{t}\int_{\mathbb{R}^3} div(\frac{v_{1}^2}{2} u) - \triangle(\frac{v_{1}^2}{2}) dx\, ds
= 0$.
\end{itemize}

Hence, by taking the integral over $[\sigma, t]\times \mathbb{R}^3$ to the above inequality, we yield the
following estimation.
\begin{equation*}
\begin{split}
\frac{1}{2} \int_{\mathbb{R}^3} v_{1}^2(t,x) dx + \int_{\sigma}^{t} \int_{\mathbb{R}^3} d_{k}^2 dx\,ds
&\leqslant \frac{1}{2} \int_{\mathbb{R}^3} v_{1}^2(\sigma ,x) dx + \int_{\sigma}^{t}|\int_{\mathbb{R}^3}
\frac{v_{1}}{|u|} u \nabla P dx| ds\\
&= \frac{1}{2}\int_{\mathbb{R}^3} v_{1}^2(\sigma ,x) dx + \int_{\sigma}^{t}|\int_{\mathbb{R}^3}P
\nabla(\frac{v_{1}}{|u|}u) dx| ds \\
&\leqslant \frac{1}{2} \int_{\mathbb{R}^3} v_{1}^2(\sigma ,x) dx + 3\int_{\sigma}^{t} \int_{\mathbb{R}^3}d_{1
}|P|\chi_{\{v_{1} > 0\}} dx\, ds\\
&\leqslant \frac{1}{2} \int_{\mathbb{R}^3} v_{1}^2 (\sigma ,x) dx + \frac{3}{2} \int_{\sigma}^{t}
\int_{\mathbb{R}^3} \alpha^2d_{1}^2 dx\,ds\\
& + \frac{3}{2}\int_{\sigma}^{t} \int_{\mathbb{R}^3} \frac{|P|^2}{\alpha^2} \chi_{ \{v_{1} > 0 \}} dx\,ds ,
\end{split}
\end{equation*}
 in which $\alpha$ can be any positive constant (In the third step of the above deduction, we
have used the nontrival fact that $|\nabla (\frac{v_{k}}{|u|} u)| \leqslant 3 d_{k}$, whose
justification will be given in the last part of Section 4).
Hence we yield the following inequality which is
valid for any $\alpha > 0 $.
\begin{equation*}
\begin{split}
\int_{\mathbb{R}^3} \frac{v_{1}^2 (t,x)}{2} dx +  \int_{\sigma}^{t}
\int_{\mathbb{R}^3} \frac{(2-3\alpha^2)d_{1}^2}{2} dx\, ds
 \leqslant \int_{\mathbb{R}^3} \frac{v_{1}^2(\sigma ,x)}{2} dx \\
  &+ \int_{\sigma}^{t} \int_{\mathbb{R}^3}\frac{3 |P|^2 \chi_{\{v_{1} > 0\}}}{2\alpha^2} dx\, ds .
\end{split}
\end{equation*}
If we choose $\alpha = (\frac{1}{2})^{\frac{1}{2}}$, then the inequality shown as above becomes

\begin{displaymath}
\frac{1}{2} \int_{\mathbb{R}^3} v_{1}^2 (t,x) dx + \frac{1}{4}
\int_{\sigma}^{t}\int_{\mathbb{R}^3} d_{1}^2 dx\, ds \leqslant \int_{\mathbb{R}^3}
\frac{v_{1}^2(\sigma ,x)}{2} dx + 3\int_{\sigma}^{t}\int_{\mathbb{R}^3}|P|^2 \chi_{\{v_{1} > 0\}}
dx\, ds .
\end{displaymath}
 By taking average over $\sigma\in [T_{0} , T_{1}]$, we can carry out the following estiamtion
 \begin{displaymath}
 \int_{\mathbb{R}^3}\frac{v_{1}^2(t,x)}{2} dx +
 \int_{T_{1}}^{t}\int_{\mathbb{R}^3}\frac{d_{1}^2}{4} dx\,ds
 \leqslant \frac{4}{2}\int_{-1}^{T_{1}}\int_{\mathbb{R}^3}v_{1}^2(\sigma ,x) +
 3\int_{-1}^{t}\int_{\mathbb{R}^3} |P|^2 \chi_{\{v_{1} > 0\}} dx\, ds .
 \end{displaymath}

 Notice that, in the above inequality, the integer $4$ appears in the first term of the right
 hand side because $\frac{1}{T_{k}-T_{k-1}} = 2^2 =4$. Now, by taking
 the $L^{\infty}$-norm over $t\in [T_{1} , 1]$, we yield
 \begin{displaymath}
 \frac{1}{4} U_{1} \leqslant 2\int_{-1}^{T_{1}}\int_{\mathbb{R}^3}v_{1}^2 dx\, ds + 3\int_{Q_{0}}
 |P|^2 \chi_{\{ v_{1} > 0\}} .
  \end{displaymath}

But, we notice that

\begin{equation*}
\begin{split}
\int_{-1}^{T_{1}}\int_{\mathbb{R}^3} v_{1}^2 dx\, ds
\leqslant \int_{Q_{0}} v_{1}^2 \chi_{\{v_{1} > 0\}} \\
& \leqslant (\int_{Q_{0}}v_{1}^6)^{\frac{1}{3}}(\int_{Q_{0}}\chi_{\{v_{1 > 0}\}})^{\frac{2}{3}}\\
& \leqslant \|u\|_{L^6(Q_{0})}^{2}(\int_{Q_{0}} \chi_{\{v_{0} > \frac{1}{2}\}})^{\frac{2}{3}}\\
& \leqslant \|u\|_{L^6(Q_{0})}^{2} (2^6 \int_{Q_{0}}v_{0}^6)^{\frac{2}{3}} = 2^4 \|u\|_{L^6(Q_{0})}^{6} .
\end{split}
\end{equation*}

On the other hand, since the pressure $P$ satisfies the equation
$-\triangle P = \sum \partial_{i} \partial_{j} (u_{i}u_{j})$.

So, by the Riesz theorem in the theory of singular integral, we have
$\|P\|_{L^3(Q_{0})} \leqslant C_{3} \|u\|_{L^6(Q_{0})}^2$, in which $C_{3}$ is some universal constant
. Hence, it follows that
\begin{equation*}
\begin{split}
\int_{Q_{0}} |P|^2 \chi_{ \{v_{1} > 0\}} \leqslant \|P\|_{L^3(Q_{0})}^{2} \| \chi_{\{v_{1} > 0\}} \|_{L^3(Q_{0})} \\
& \leqslant C_{3}^2 \|u\|_{L^6(Q_{0})}^4 \|\chi_{\{v_{0} > \frac{1}{2}\}}\|_{L^3(Q_{0})} \\
& \leqslant C_{3}^2 \|u\|_{L^6(Q_{0})}^4 (2^6 \int_{Q_{0}}v_{0}^6)^{\frac{1}{3}}\\
& = 4 C_{3}^2 \|u\|_{L^6(Q_{0})}^6 .
\end{split}
\end{equation*}

Hence it follows that

\begin{equation*}
\begin{split}
\frac{1}{4} U_{1} \leqslant 2 \int_{Q_{0}} v_{1}^2 + 3 \int_{Q_{0}} |P|^2 \chi_{\{v_{1} > 0\}} \\
& \leqslant 2^5 \|u\|_{L^6(Q_{0})}^6 + 12 C_{3}^2 \|u\|_{L^{6}(Q_{0})}^6 .
\end{split}
\end{equation*}

As a result, by taking $A = 2^7 + 48 C_{3}^2$, we can at once deduce that

\begin{equation*}
U_{1} \leqslant A \|u\|_{L^6(Q_{0})}^6 .
\end{equation*}

So, we are done in establishing Lemma~\ref{supp}

\section{Preliminaries for the proof of proposition 2.1}

\begin{lem}
There exists some constant $C > 0$, such that for any $k\geqslant 1$, and any
$F\in L^{\infty} (T_{k} , 1 ; L^2(\mathbb{R}^3))$ with $\nabla F \in L^2(Q_{k})$, we have
$\|F\|_{L^{\frac{10}{3}}(Q_{k})}\leqslant C \|F\|_{L^{\infty}(T_{k} , 1 ; L^2(\mathbb{R}^3))}^{\frac{2}{5}} \|\nabla
F\|_{L^2(Q_{k})}^{\frac{3}{5}}$.
\end{lem}

\begin{proof}
By Sobolev-embedding Theorem, there is a constant $C$, depending only on the dimension of $\mathbb{R}^3$, such that
\begin{equation*}
(\int_{\mathbb{R}^3} |F(t,x)|^6 dx)^{\frac{1}{6}} \leqslant C (\int_{\mathbb{R}^3}|\nabla F(t,x)|^2
dx)^{\frac{1}{2}} .
\end{equation*}

for any $t\in [T_{k} , 1]$, where $k\geqslant 1$, and $F$ is some function which verifies $F\in L^{\infty}(T_{k},1 ;
L^2(\mathbb{R}^3))$, and $\nabla F \in L^2(Q_{k})$. By taking the power $2$ on both sides of the above inequality and
then taking integration along the variable $t\in [T_{k}, 1]$, we yield

\begin{equation*}
\int_{T_{k}}^{1} (\int_{\mathbb{R}^3} |F|^6 dx)^{\frac{1}{3}} dt \leqslant C^2
\int_{T_{k}}^{1}\int_{\mathbb{R}^3} |\nabla F|^2 dx\,dt .
\end{equation*}

On the other hand, by Holder's inequality, we have
\begin{equation*}
\begin{split}
\|F\|_{L^{\frac{10}{3}}(Q_{k})}^{\frac{10}{3}} =
\int_{T_{k}}^{1} \int_{\mathbb{R}^3} |F|^2 |F|^{\frac{4}{3}} dx\,dt\\
& \leqslant
\int_{T_{k}}^{1} (\int_{\mathbb{R}^3} |F|^6 dx)^{\frac{1}{3}}(\int_{\mathbb{R}^3}|F|^2 dx)^{\frac{2}{3}} dt\\
& \leqslant \|F\|_{L^{\infty}(T_{k} , 1 ; L^2(\mathbb{R}^3))}^{\frac{4}{3}} \|F\|_{L^2(T_{k}, 1 ;
L^6(\mathbb{R}^3))}^2.
\end{split}
\end{equation*}

By taking the advantage that $\|F\|_{L^2(T_{k} ,1 ; L^6(\mathbb{R}^3) )} \leqslant C \|\nabla F\|_{L^2(Q_{k})}$, we yield
\begin{equation*}
\|F\|_{L(Q_{k})^{\frac{10}{3}}}^{\frac{10}{3}}
\leqslant C^2 \|F\|_{L^{\infty}(T_{k} , 1 ; L^2(\mathbb{R}^3))}^{\frac{4}{3}} \|\nabla F\|_{L^2(Q_{k})}^2 .
\end{equation*}

Hence, we have

\begin{equation*}
\|F\|_{L^{\frac{10}{3}}(Q_{k})} \leqslant C \|F\|_{L^{\infty}(T_{k} , 1 ; L^2(\mathbb{R}^3))}^{\frac{2}{5}}
\|\nabla F\|_{L^2(Q_{k})}^{\frac{3}{5}} .
\end{equation*}
so, we are done
\end{proof}

\begin{lem}\label{cheb}
For any $1 < q < \infty$, we have $\|\chi_{\{ v_{k} >0 \} }\|_{L^q(Q_{k-1})} \leqslant 2^{\frac{10k}{3q}}
C^{\frac{1}{q}}U_{k-1}^{\frac{5}{3q}}$ .
\end{lem}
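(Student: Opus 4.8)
The plan is to derive the estimate from a Chebyshev-type pointwise domination of $\chi_{\{v_k>0\}}$, combined with the Gagliardo--Nirenberg inequality of the preceding lemma applied to $v_{k-1}$.

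The crucial step is a quantitative gain coming from the gap between consecutive truncation levels. Since $v_k = \{|u|-(1-2^{-k})\}_+$, the set $\{v_k>0\}$ is exactly $\{|u|>1-2^{-k}\}$. On this set $|u|>1-2^{-k}>1-2^{-(k-1)}$, so
\[
v_{k-1} = |u|-(1-2^{-(k-1)}) > (1-2^{-k})-(1-2^{-(k-1)}) = 2^{-k},
\]
and hence $2^k v_{k-1}>1$ there. Since the left-hand side vanishes off $\{v_k>0\}$, this yields the pointwise bound
\[
\chi_{\{v_k>0\}} \leq \bigl(2^k v_{k-1}\bigr)^{\frac{10}{3}} \qquad \text{a.e.},
\]
where the exponent $\tfrac{10}{3}$ is chosen to match the Sobolev exponent appearing in the preceding lemma.

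Next I would integrate over $Q_{k-1}$, obtaining
\[
\|\chi_{\{v_k>0\}}\|_{L^q(Q_{k-1})}^q = \int_{Q_{k-1}}\chi_{\{v_k>0\}} \leq 2^{\frac{10k}{3}}\,\|v_{k-1}\|_{L^{\frac{10}{3}}(Q_{k-1})}^{\frac{10}{3}},
\]
and then apply the Gagliardo--Nirenberg estimate of the preceding lemma on the slab $Q_{k-1}$ to $F=v_{k-1}$:
\[
\|v_{k-1}\|_{L^{\frac{10}{3}}(Q_{k-1})} \leq C\,\|v_{k-1}\|_{L^\infty(T_{k-1},1;L^2(\mathbb{R}^3))}^{\frac{2}{5}}\,\|\nabla v_{k-1}\|_{L^2(Q_{k-1})}^{\frac{3}{5}}.
\]
Both norms are controlled by $U_{k-1}$: directly $\|v_{k-1}\|_{L^\infty(T_{k-1},1;L^2)}^2 \leq 2U_{k-1}$ from the definition of $U_{k-1}$, while on $\{v_{k-1}>0\}$ the coefficients $(1-2^{-(k-1)})/|u|$ and $v_{k-1}/|u|$ sum to $1$ and $|\nabla|u||\leq|\nabla u|$, giving $|\nabla v_{k-1}|^2\leq d_{k-1}^2$ pointwise and hence $\|\nabla v_{k-1}\|_{L^2(Q_{k-1})}^2 \leq U_{k-1}$. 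Since the powers of $U_{k-1}$ collapse as $\tfrac15+\tfrac{3}{10}=\tfrac12$, substitution yields $\|v_{k-1}\|_{L^{10/3}(Q_{k-1})}^{10/3}\leq C'\,U_{k-1}^{5/3}$, and combining with the Chebyshev step and taking $q$-th roots gives
\[
\|\chi_{\{v_k>0\}}\|_{L^q(Q_{k-1})} \leq 2^{\frac{10k}{3q}}\,(C')^{\frac1q}\,U_{k-1}^{\frac{5}{3q}},
\]
which is the claim after renaming the universal constant.

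The only genuinely substantive point is the pointwise domination $\chi_{\{v_k>0\}}\le (2^k v_{k-1})^{10/3}$, that is, the gain $v_{k-1}>2^{-k}$ on the smaller super-level set; this is precisely the mechanism converting a measure estimate at level $k$ into energy at level $k-1$, which drives the De Giorgi iteration. The rest is interpolation bookkeeping, where the two things to verify carefully are the inequality $|\nabla v_{k-1}|^2\le d_{k-1}^2$ (so that the dissipation term in $U_{k-1}$ controls $\|\nabla v_{k-1}\|_{L^2}^2$) and that the interpolation exponents produce exactly the power $U_{k-1}^{5/3}$.
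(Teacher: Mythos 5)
Your proof is correct and follows essentially the same route as the paper: the inclusion $\{v_k>0\}\subset\{v_{k-1}>2^{-k}\}$ (phrased by you as the pointwise bound $\chi_{\{v_k>0\}}\leq (2^k v_{k-1})^{10/3}$), Chebyshev, the Gagliardo--Nirenberg lemma applied to $v_{k-1}$, and the bounds $\|v_{k-1}\|_{L^\infty L^2}^2\leq 2U_{k-1}$, $|\nabla v_{k-1}|\leq d_{k-1}$. Your treatment is in fact slightly more careful than the paper's (you keep the factor $2$ from the definition of $U_{k-1}$ and justify $|\nabla v_{k-1}|\leq d_{k-1}$ inline, which the paper defers to Section 4), so nothing further is needed.
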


\begin{proof}
First, we have to notice that $\{v_{k} > 0\}$ is a subset of $\{v_{k-1} > \frac{1}{2^k}\}$, hence we
have
\begin{equation*}
\int_{Q_{k-1}} \chi_{\{ v_{k} > 0\}} \leqslant \int_{Q_{k-1}} \chi_{\{v_{k-1} > \frac{1}{2^k}\}} \leqslant
2^{\frac{10k}{3}} \int_{Q_{k-1}} |v_{k-1}|^{\frac{10}{3}} .
\end{equation*}

By our previous Lemma, we have

\begin{equation*}
\begin{split}
\|v_{k-1}\|_{L^{\frac{10}{3}(Q_{k-1})}}^{\frac{10}{3}} \leqslant \\
&C^2 \|v_{k-1}\|_{L^{\infty}(T_{k-1},1;L^2(\mathbb{R}^3))}^{\frac{4}{3}}
\|\nabla v_{k-1} \|_{L^2(Q_{k-1})}^{2} \\
& \leqslant C^2 (U_{k-1}^{\frac{1}{2}})^{\frac{4}{3}} \|d_{k-1}\|_{L^2(Q_{k-1})}^{2} \\
& \leqslant C^2 U_{k-1}^{\frac{2}{3}} U_{k-1} \\
& = C^2 U_{k-1}^{\frac{5}{3}} .
\end{split}
\end{equation*}

So, it follows that $\int_{Q_{k-1}} \chi_{\{v_{k} > 0\}} \leqslant 2^{\frac{10k}{3}} C^2
U_{k-1}^{\frac{5k}{3}}$, and hence we have
$\|\chi_{\{  v_{k} > 0 \}}\|_{L^q(Q_{k-1})} \leqslant 2^{\frac{10k}{3q}} C^{\frac{1}{q}} U_{k-1}^{\frac{5}{3q}}$,
where C is some  universal constant. So, we are done.
\end{proof}

In the proof of Lemma~\ref{cheb}, we have used the fact that $|v_{k}| \leqslant d_{k}$, whose
justification will be given immediately in the following paragraph.\\
Before we leave this section, we also want to list out some inequalities which will often be used in the proof
of proposition 1.1 as follow:

\begin{itemize}
\item $|(1-\frac{v_{k}}{|u|})u| \leqslant 1-\frac{1}{2^k}$.
\item $\frac{v_{k}}{|u|}|\nabla u| \leqslant d_{k}$.
\item $\chi_{\{v_{k} > 0\}}|\nabla |u|| \leqslant d_{k} $.
\item $|\nabla v_{k}|\leqslant d_{k}$.
\item $|\nabla (\frac{v_{k}}{|u|}u)| \leqslant 3d_{k}$.
\end{itemize}

Now, we first want to justify the validity of
$|(1-\frac{v_{k}}{|u|})u|\leqslant 1-\frac{1}{2^k}$. In the case in
which the point $(t,x)$ satisfies $|u(t,x)|< 1-\frac{1}{2^k}$, we
have $v_{k}(t,x) = 0$, and hence it follows that
\begin{equation*}
|\{1-\frac{v_{k}(t,x)}{|u(t,x)|}\}u(t,x)| = |u(t,x)| < 1-\frac{1}{2^k} .
\end{equation*}
 In the case in which $(t,x)$ satisfies $|u(t,x)|\geqslant 1-\frac{1}{2^k}$, we have
 $v_{k}(t,x) = |u(t,x)| - (1-\frac{1}{2^k})$, and hence it follows that

\begin{equation*}
|\{ 1 - \frac{v_{k}}{|u|}\} u(t,x)| = |1-\frac{|u|- (1-\frac{1}{2^k})}{|u|}||u|= 1-\frac{1}{2^k} .
\end{equation*}

So, no matter in which case, we always have the conclusion that $|(1-\frac{v_{k}}{|u|})u|\leqslant
1-\frac{1}{2^k}$.\\
Next, according to the definition of $d_{k}^2$, we can carry out the following estimation
\begin{equation*}
d_{k}^2 \geqslant \frac{v_{k}}{|u|} |\nabla u|^2 \geqslant \{\frac{v_{k}}{|u|} |\nabla u|\}^2 .
\end{equation*}
Hence, by taking square root, it follows at once that $d_{k} \geqslant \frac{v_{k}}{|u|} |\nabla u|$.\\
We now turn our attention to the inequality
$\chi_{\{ |u|\geqslant (1-\frac{1}{2^k})\}   }|\nabla |u|| \leqslant d_{k}$. To justify
it, we recall that $|\nabla u|\geqslant |\nabla |u||$. Hence, it follows from the definition of $d_{k}^2$ that

\begin{equation*}
d_{k}^2 \geqslant \frac{1-\frac{1}{2^k}}{|u|}\chi_{\{|u|\geqslant 1-\frac{1}{2^k}\}} |\nabla |u||^2
+ \{1-\frac{1-\frac{1}{2^k}}{|u|}\}\chi_{\{|u|\geqslant 1-\frac{1}{2^k}\}}|\nabla |u||^2 .
\end{equation*}

So, by simplifying the right-hand side of the above inequality, we can deduce that
$d_{k}^2 \geqslant \chi_{\{|u|\geqslant 1-\frac{1}{2^k}\}} |\nabla|u||^2$. Hence, we have
$d_{k}\geqslant \chi_{\{|u|\geqslant 1-\frac{1}{2^k}\}} |\nabla |u||$. In addition, since it is obvious to see
that $\nabla v_{k} = \chi_{\{|u|\geqslant 1-\frac{1}{2^k}\}} \nabla |u|$, we also have the result that
$|\nabla v_{k}|\leqslant d_{k}$.\\
Finally, we want to justify the inequality that $|\nabla (\frac{v_{k}}{|u|}u)|\leqslant 3d_{k}$. So, we
notice that, by applying the product rule, we have

\begin{equation*}
\nabla (\frac{v_{k}}{|u|} u) = \nabla (v_{k})\frac{u}{|u|} + \frac{v_{k}}{|u|}\nabla u
- \frac{v_{k}}{|u|^2}u\nabla |u| .
\end{equation*}

However, since $\frac{v_{k}}{|u|}|\nabla u| \leqslant d_{k}$, and
$|\frac{v_{k}}{|u|^2}u\nabla |u|| \leqslant \chi_{\{|u|\geqslant 1-\frac{1}{2^k}\}} |\nabla |u|| \leqslant
d_{k}$, it follows at once from the above expression that $|\nabla (\frac{v_{k}}{|u|}u)|\leqslant 3d_{k}$.

\section{proof of proposition 2.1}

To begin the argument, we recall that, by multiplying the equation
$\partial_{t} u - \triangle u + div( u \otimes u) + \nabla P = 0$ on $(-\infty , \infty )\times \mathbb{R}^3$, we
yield the following inequality formally, which is indeed valid in the sense of distribution

\begin{equation*}
\partial_{t} (\frac{v_{k}^2}{2}) + d_{k}^{2} - \triangle (\frac{v_{k}^2}{2}) + div(\frac{v_{k}^2}{2} u) +
\frac{v_{k}}{|u|} u \nabla P \leqslant 0 .
\end{equation*}

Next, let us consider the variables $\sigma$ , $t$ verifying $T_{k-1} \leqslant \sigma \leqslant T_{k} \leqslant t
\leqslant 1$. Then, we have

\begin{itemize}
\item $\int_{\sigma}^{t} \int_{\mathbb{R}^3} \partial_{t} (\frac{v_{k}^2}{2}) dx\,ds =
\int_{\mathbb{R}^3} \frac{v_{k}^2(t,x)}{2} dx - \int_{\mathbb{R}^3} \frac{v_{k}^2(\sigma ,x)}{2} dx$.
\item $\int_{\sigma}^{t}\int_{\mathbb{R}^3} \triangle (\frac{v_{k}^2}{2}) dx\,ds = 0$.
\item $\int_{\sigma}^{t} \int_{\mathbb{R}^3} div (\frac{v_{k}^2}{2} u) dx\,ds = 0$.
\end{itemize}

So, it is straightforward to see that

\begin{equation*}
\int_{\mathbb{R}^3} \frac{v_{k}^2(t,x)}{2} dx + \int_{\sigma}^{t}\int_{\mathbb{R}^3}d_{k}^2dx\,ds
\leqslant \int_{\mathbb{R}^3} \frac{v_{k}^2(\sigma ,x)}{2}dx +
\int_{\sigma}^{t}    \vert \int_{\mathbb{R}^3}\frac{v_{k}}{|u|} u \nabla P dx \vert       ds ,
\end{equation*}

for any $\sigma$, $t$ satisfying $T_{k-1}\leqslant \sigma \leqslant T_{k} \leqslant t \leqslant 1$. By taking the
average over the variable $\sigma$, we yield

\begin{equation*}
\int_{\mathbb{R}^3} \frac{v_{k}^2(t,x)}{2} dx + \int_{T_{k}}^{t}\int_{\mathbb{R}^3}d_{k}^2 dx\,ds
\leqslant 2^k\int_{T_{k-1}}^{T_{k}}\int_{\mathbb{R}^3}v_{k}^2(s,x)dx\,ds
+ \int_{T_{k-1}}^{t}|\int_{\mathbb{R}^3}\frac{v_{k}}{|u|} u \nabla Pdx|ds .
\end{equation*}

By taking  the sup over $t\in [T_{k} , 1]$. the above inequality will give the following

\begin{equation*}
U_{k} \leqslant 2^k\int_{Q_{k-1}}v_{k}^2 + \int_{T_{k-1}}^{1}|\int_{\mathbb{R}^3}\frac{v_{k}}{|u|}u\nabla
Pdx|ds .
\end{equation*}

But, from Lemma~\ref{cheb} and Holder's inequality, we have

\begin{equation*}
\begin{split}
\int_{Q_{k-1}}v_{k}^2 = \int_{Q_{k-1}}v_{k}^2 \chi_{\{v_{k} > 0\}} \\
& \leqslant (\int_{Q_{k-1}}v_{k}^{\frac{10}{3}})^{\frac{3}{5}} \|\chi_{\{v_{k} > 0 \}}\|_{L^{\frac{5}{2}}(Q_{k-1})}\\
& \leqslant \|v_{k}\|_{L^{\frac{10}{3}}(Q_{k-1})}^{2} 2^{\frac{4k}{3}}C^{\frac{2}{5}}U_{k-1}^{\frac{2}{3}}\\
& \leqslant \|v_{k-1}\|_{L^{\frac{10}{3}}(Q_{k-1})}^{2}2^{\frac{4k}{3}}C^{\frac{2}{5}}U_{k-1}^{\frac{2}{3}}\\
& \leqslant CU_{k-1}^{\frac{5}{3}}2^{\frac{4k}{3}} .
\end{split}
\end{equation*}

As a result, we have the following conclusion

\begin{equation}
U_{k}\leqslant 2^{\frac{7k}{3}}C U_{k-1}^{\frac{5}{3}} +
\int_{T_{k-1}}^{1}|\int_{\mathbb{R}^3}\frac{v_{k}}{|u|}u \nabla p dx|ds .
\end{equation}

Now, in order to estimate the term $\int_{T_{k-1}}^{1}|\int_{\mathbb{R}^3}\frac{v_{k}}{|u|}u\nabla Pdx|ds$, we would
like to carry out the following computation

\begin{equation*}
\begin{split}
-\triangle P = \sum \partial_{i} \partial_{j} (u_{i}u_{j})\\
& = \sum \partial_{i} \partial_{j} \{ (1-\frac{v_{k}}{|u|})u_{i} (1-\frac{v_{k}}{|u|})u_{j}
+ 2 (1-\frac{v_{k}}{|u|}) u_{i} \frac{v_{k}}{|u|} u_{j}  \}\\
& + \sum \partial_{i} \partial_{j} \{ \frac{v_{k}}{|u|} u_{i} \frac{v_{k}}{|u|} u_{j} \} .
\end{split}
\end{equation*}

This motivates us to decompose $P$ as $P = P_{k1} + P_{k2}$, in which

\begin{equation*}
-\triangle P_{k1} = \sum \partial_{i} \partial_{j} \{ (1-\frac{v_{k}}{|u|})u_{i}
 (1-\frac{v_{k}}{|u|})u_{j} + 2(1-\frac{v_{k}}{|u|})u_{i} \frac{v_{k}}{|u|}u_{j} \} ,
\end{equation*}

and that

\begin{equation*}
-\triangle P_{k2} = \sum \partial_{i} \partial_{j}\{\frac{v_{k}}{|u|}u_{i} \frac{v_{k}}{|u|}u_{j} \} .
\end{equation*}

First, we have to notice that:

\begin{equation*}
\begin{split}
| (1-\frac{v_{k}}{|u|})^2 u_{i} u_{j} + 2(1-\frac{v_{k}}{|u|})u_{i} \frac{v_{k}}{|u|} u_{j} |\\
&\leqslant (1-\frac{1}{2^k}) \{(1-\frac{v_{k}}{|u|}) |u_{j}| + 2\frac{v_{k}}{|u|}|u_{j}|\}\\
&\leqslant (1-\frac{v_{k}}{|u|})|u_{j}| + 2\frac{v_{k}}{|u|}|u_{j}|\\
&\leqslant 3|u_{j}| \leqslant 3|u| .
\end{split}
\end{equation*}

So, by Riesz's Theorem in the theroy of singular operator, we yield

\begin{equation*}
\|P_{k1}\|_{L^6(Q_{k-1})} \leqslant C_{6} \|3u\|_{L^6(Q_{k-1})} \leqslant 3C_{6} (\frac{1}{A})^{\frac{1}{6}}
\leqslant 3C_{6} .
\end{equation*}

So, we have

\begin{equation*}
\begin{split}
\int_{T_{k-1}}^{1}|\int_{\mathbb{R}^3} \frac{v_{k}}{|u|}u \nabla P_{k1} dx |ds\\
& = \int_{T_{k-1}}^{1}|\int_{\mathbb{R}^3} P_{k1} \nabla (\frac{v_{k}}{|u|}u)dx|ds\\
& \leqslant 3 \int_{T_{k-1}}^{1} \int_{\mathbb{R}^3}d_{k}|P_{k1}|\chi_{\{v_{k}>0\}}dx\,ds\\
& \leqslant 3 \|d_{k}\|_{L^2(Q_{k-1})} \|P_{k1}\|_{L^6(Q_{k-1})} \|\chi_{\{v_{k}>0\}}\|_{L^3(Q_{k-1})}\\
& \leqslant 3 (2^{\frac{1}{2}}) \|d_{k-1}\|_{L^2(Q_{k-1})} 3C_{6} 2^{\frac{10k}{9}} C^{\frac{1}{3}}
U_{k-1}^{\frac{5}{9}}\\
& \leqslant 9 (2^{\frac{1}{2}}) C_{6} C^{\frac{1}{3}} U_{k-1}^{\frac{1}{2}}
2^{\frac{10k}{9}}U_{k-1}^{\frac{5}{9}} .
\end{split}
\end{equation*}

That is, we have the following conclusion that

\begin{equation}
\int_{T_{k-1}}^{1}|\int_{\mathbb{R}^3} \frac{v_{k}}{|u|} u \nabla P_{k1} dx|ds
\leqslant C 2^{\frac{10k}{9}} U_{k-1}^{\frac{19}{18}} .
\end{equation}

Next, we would like to estimate the term
$\int_{T_{k-1}}^{1}|\int_{\mathbb{R}^3} \frac{v_{k}}{|u|}u \nabla P_{k2}dx|ds$. First, we recall that, by the
very definition of $P_{k2}$ ,we have

\begin{equation*}
P_{k2} = \sum R_{i}R_{j} \{ \frac{v_{k}}{|u|} u_{i} \frac{v_{k}}{|u|} u_{j} \} .
\end{equation*}

, in which $R_{i}$ , $R_{j}$ etc are the Riesz's Transforms. Hence, we have

\begin{equation*}
\nabla P_{k2} = \sum R_{i} R_{j} \{ \nabla (\frac{v_{k}}{|u|} u_{i} \frac{v_{k}}{|u|} u_{j} ) \} .
\end{equation*}

Now, we notice that
\begin{equation*}
\begin{split}
|\nabla (\frac{v_{k}}{|u|} u_{i} \frac{v_{k}}{|u|} u_{j})|
\leqslant |\nabla ( \frac{v_{k}}{|u|}u_{i})| |\frac{v_{k}}{|u|} u_{j}| +
\frac{v_{k}}{|u|} |u_{i}| |\nabla (\frac{v_{k}}{|u|} u_{j})|\\
& \leqslant 3d_{k} v_{k} + v_{k} (3d_{k})\\
& = 6v_{k}d_{k} .
\end{split}
\end{equation*}

So, by applying the Riesz's Theorem in the theory of Singular integral, we have

\begin{equation*}
\begin{split}
\|\nabla P_{k2}\|_{L^{\frac{3}{2}}(Q_{k-1})}
\leqslant C_{\frac{3}{2}} \|v_{k} d_{k}\|_{L^{\frac{3}{2}}(Q_{k-1})}\\
& \leqslant C_{\frac{3}{2}}\{ (\int_{Q_{k-1}}v_{k}^{6})^{\frac{1}{4}}
(\int_{Q_{k-1}}d_{k}^2)^{\frac{3}{4}}\}^{\frac{2}{3}}\\
& = C_{\frac{3}{2}} (\int_{Q_{k-1}}v_{k}^6)^{\frac{1}{6}} (\int_{Q_{k-1}} d_{k}^2)^{\frac{1}{2}}\\
& \leqslant C_{\frac{3}{2}}\|u\|_{L^6(Q_{0})} \|d_{k}\|_{L^2(Q_{k-1})}\\
& \leqslant C_{\frac{3}{2}} (\frac{1}{A})^{\frac{1}{6}} (2)^{\frac{1}{2}} \|d_{k-1}\|_{L^2(Q_{k-1})}\\
& \leqslant 2^{\frac{1}{2}} C_{\frac{3}{2}} U_{k-1}^{\frac{1}{2}} .
\end{split}
\end{equation*}

So, by applying the generalized Holder's inequality with exponents $\frac{10}{3}$, $30$, $\frac{3}{2}$
to the terms $v_{k}$, $\chi_{\{v_{k}>0\}}$, $\nabla P_{k2}$ respectively, we yield

\begin{equation*}
\begin{split}
\int_{T_{k-1}}^{1}   |\int_{\mathbb{R}^3}\frac{v_{k}}{|u|}u\nabla P_{k2} dx| dt\\
& \leqslant \int_{Q_{k-1}} v_{k} \chi_{\{v_{k}>0\}} |\nabla P_{k2}| dx\,dt\\
& \leqslant \|v_{k}\|_{L^{\frac{10}{3}}(Q_{k-1})} \|\chi_{\{v_{k}>0\}}\|_{L^{30}(Q_{k-1})}
\|\nabla P_{k2}\|_{L^{\frac{3}{2}}(Q_{k-1})}\\
& \leqslant U_{k-1}^{\frac{1}{2}} 2^{\frac{k}{9}} C^{\frac{1}{30}} U_{k-1}^{\frac{5}{90}} 2^{\frac{1}{2}}
C_{\frac{3}{2}} U_{k-1}^{\frac{1}{2}}\\
& = C 2^{\frac{k}{9}} U_{k-1}^{1+\frac{5}{90}}\\
& = C 2^{\frac{k}{9}} U_{k-1}^{\frac{19}{18}} .
\end{split}
\end{equation*}

That is, we have

\begin{equation}
\int_{T_{k-1}}^{1} | \int_{\mathbb{R}^3} \frac{v_{k}}{|u|} u \nabla P_{k2} dx  | dt
\leqslant C 2^{\frac{k}{9}} U_{k-1}^{\frac{19}{18}} .
\end{equation}
So, by combining inequalities , we yield

\begin{equation*}
\begin{split}
U_{k} \leqslant 2^{\frac{7k}{3}}C U_{k-1}^{\frac{5}{3}}
+ \int_{T_{k-1}}^{1} |\int_{\mathbb{R}^3} \frac{v_{k}}{|u|}u\nabla P dx| dt\\
& \leqslant 2^{\frac{7k}{3}}CU_{k-1}^{\frac{5}{3}} + C2^{\frac{10k}{9}}U_{k-1}^{\frac{19}{18}}
+ C2^{\frac{k}{9}}U_{k-1}^{\frac{19}{18}}\\
& \leqslant 2^{\frac{7k}{3}}CU_{k-1}^{\frac{19}{18}} .
\end{split}
\end{equation*}

That is, we will have the result that

\begin{equation*}
U_{k} \leqslant C2^{\frac{7k}{3}}U_{k-1}^{\frac{19}{18}} ,
\end{equation*}

for any $k\geqslant 1$.

\section{Proof of proposition \ref{third}}

Now, we would like to establish proposition~\ref{third} on the
foundation of proposition~\ref{first}. To begin, let $C^*$ be the
positive universal constant occuring in proposition~\ref{first}.
First, let show the proposition in the special case $\lambda=2$. We
chose $T$ to be an arbritary chosen positive number greater than
$2$, and let $u$ be a solution of the Navier-Stokes equation on
$(0,\infty)\times \mathbb{R}^3$. In the case in which
 $u$  satisfies the condition that $\int_{0}^{T}\int_{\mathbb{R}^3}|u|^6dx\,ds \leqslant (C^*)^6$, we define
 the function $u^*$ by $u^*(s,x) = u(s+(T-1), x)$,which can be regarded to be another solution of the
 Navier-Stokes equation on $[-1,1]\times \mathbb{R}^3$ satsifying

\begin{equation*}
\int_{-1}^{1}\int_{\mathbb{R}^3}|u^*|^6dx\,ds = \int_{T-2}^{T}\int_{\mathbb{R}^3}|u|^6dx\,ds
\leqslant \int_{0}^{T}\int_{\mathbb{R}^3}|u|^6dx\,ds \leqslant (C^*)^6 .
\end{equation*}

Hence, we have $\|u^*\|_{L^6([-1,1]\times \mathbb{R}^3)} \leqslant C^*$. So, it follows from the
conclusion of proposition~\ref{first}
 that $\|u(T, \cdot)\|_{L^{\infty}(\mathbb{R}^3)} = \|u^*(1, \cdot)\|_{L^{\infty}(\mathbb{R}^3)}\leqslant 1$. \\
So, the above argument shows that

\begin{itemize}
\item we have $\|u(T, \cdot)\|_{L^{\infty}(\mathbb{R}^3)}\leqslant 1$, if $T>2$, and $u$
is a solution of the Navier-Stokes equation satisfying $\int_{0}^{T}\int_{\mathbb{R}^3}|u|^6dx\,ds \leqslant (C^*)^6$.
\end{itemize}

Next, we also need to deal with the case in which the solution $u$
satisfies the condition that
$\int_{0}^{T}\int_{\mathbb{R}^3}|u|^6dx\,ds > (C^*)^6$. In this
case, let us consider the function $u_{\varepsilon}$ defined by
$u_{\varepsilon}(t,x) = \varepsilon u(\varepsilon^2 t, \varepsilon
x)$, in which $\varepsilon > 0$ is arbritary. Then, by applying the
change of variable formula, it is easy to see that

\begin{equation*}
\int_{0}^{\frac{T}{\varepsilon^2}}\int_{\mathbb{R}^3}|u_{\varepsilon}|^6dx\,ds
= \varepsilon \int_{0}^{T}\int_{\mathbb{R}^3}|u|^6dx\,ds .
\end{equation*}

So, by taking $\varepsilon = (C^*)^6\cdot
\{2\int_{0}^{T}\int_{\mathbb{R}^3}|u|^6dx\,ds\}^{-1}$, we yield

\begin{equation*}
\int_{0}^{\frac{T}{\varepsilon^2}}\int_{\mathbb{R}^3}|u_{\varepsilon}|^6dx\,ds
= \frac{(C^*)^6}{2} < (C^*)^6 .
\end{equation*}

The last inequality signifies that the solution $u_{\varepsilon}$
falls back to the first case in this discussion. Hence, it follows
directly form the conclusion we made for the frist case that
$u_{\varepsilon}$ must satisfies
$\|u_{\varepsilon}(\frac{T}{\varepsilon^2} , \cdot
)\|_{L^{\infty}(\mathbb{R}^3)} \leqslant 1$. So, we eventually have

\begin{equation*}
\|u(T, \cdot)\|_{L^{\infty}(\mathbb{R}^3)} = \frac{1}{\varepsilon}
\|u_{\varepsilon}(\frac{T}{\varepsilon^2}
,\cdot)\|_{L^{\infty}(\mathbb{R}^3)}
 \leqslant \frac{1}{\varepsilon} = \frac{2}{(C^*)^6} \int_{0}^{T}\int_{\mathbb{R}^3}|u|^6dx\,ds .
\end{equation*}

As a result, by all the discussion we made as above, we conclude that, no matter in which case, we always have the
following inequality to be valid for any $T>2$, and any solution $u$ of the Navier-Stokes equation on
$(0,\infty )\times\mathbb{R}^3$

\begin{equation*}
\|u(T, \cdot )\|_{L^{\infty}(\mathbb{R}^3)} \leqslant A \{ 1 +
\int_{0}^{T}\int_{\mathbb{R}^3}|u|^6dx\,ds \},
\end{equation*}
 where $A$ is the universal constant defined by $A = max\{ 1, \frac{2}{(C^*)^6}\}$. This gives the proof of Proposition \ref{third}
 in the special case $\lambda=2$.

Next, let $\lambda$ be a fixed positive number satisfying $0 <
\lambda < 2$. As usual, let $u$ be a solution of the Navier-Stokes
equation on $(0, \infty)\times \mathbb{R}^3$. Now, let us consider
the function $w$ which is defined by
\begin{equation*}
w(t,x) = (\frac{\lambda}{2})^{\frac{1}{2}} u(\frac{\lambda}{2} t , (\frac{\lambda}{2})^{\frac{1}{2}} x) .
\end{equation*}

Then, by applying the above case to $w$, we have the following
estimation, which is valid for any $T > \lambda$.

\begin{equation*}
\begin{split}
\|u(T,\cdot )\|_{L^{\infty}(\mathbb{R}^3)}
\leqslant (\frac{2}{\lambda})^{\frac{1}{2}}\|w(\frac{2T}{\lambda} , \cdot )\|_{L^{\infty}(\mathbb{R}^3)}\\
& \leqslant    (\frac{2}{\lambda})^{\frac{1}{2}}A \{ 1 +
\int_{0}^{\frac{2T}{\lambda}}\int_{\mathbb{R}^3}|w|^6dx\,ds\}\\
& \leqslant  (\frac{2}{\lambda})^{\frac{1}{2}} A \{ 1 +
(\frac{\lambda}{2})^{\frac{1}{2}}\int_{0}^{T}\int_{\mathbb{R}^3}|u|^6dx\,ds \}\\
& \leqslant (\frac{2}{\lambda})^{\frac{1}{2}} A \{ 1 + \int_{0}^{T}\int_{\mathbb{R}^3}|u|^6dx\,ds \} .
\end{split}
\end{equation*}

This gives proposition~\ref{third}, where the universal constant
$A_{\lambda}$ is chosen to be $A_{\lambda} =
(\frac{2}{\lambda})^{\frac{1}{2}} A$.

\section{establishment of Theorem 1}

Finally, we are now ready to establish the conclusion of
Theorem~\ref{main} on the foundation of proposition~\ref{third}. We
make use of the following result  due to Kato~\cite{Kato} (see also
the book of Lemari\'e-Rieusset \cite{Rieusset}).

\begin{thm}
Let $p>3$. Then, for any given initial datum $u_{0} \in
L^p(\mathbb{R}^3)$ satisfying $div (u_{0}) = 0$, there exists a
positive $T^*$ and a unique weak solution $u \in C ([0,T^*) ;
L^p(\mathbb{R}^3))$ for the Navier-Stokes equation on $(0,T^*)\times
\mathbb{R}^3$ so that $u(0, \cdot) = u_{0}$. This solution is then
smooth on $(0,T^*)\times \mathbb{R}^3$. In addition, such a unique
solution will also satisfies the extra condition that $u(t, \cdot)
\in C_{0}(\mathbb{R}^3)$, for all $t \in (0, T^*)$.
\end{thm}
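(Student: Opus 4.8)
The plan is to prove this classical statement by the Fujita--Kato fixed-point method, working with the mild (integral) formulation of the equation rather than the PDE directly. First I would apply the Leray projection $\mathbb{P}$ onto divergence-free vector fields to eliminate the pressure, rewriting (\ref{NS})--(\ref{incompressibility}) as $\partial_t u - \triangle u + \mathbb{P}\,\Div(u\otimes u)=0$, and then convert this to the integral equation
$$
u(t)=e^{t\triangle}u_0-\int_0^t e^{(t-s)\triangle}\,\mathbb{P}\,\Div\bigl(u(s)\otimes u(s)\bigr)\,ds
$$
via Duhamel's principle. A solution of this integral equation with enough integrability is automatically a weak solution, so the task reduces to the quadratic fixed-point problem $u=e^{t\triangle}u_0-B(u,u)$, where $B(u,v)(t)=\int_0^t e^{(t-s)\triangle}\,\mathbb{P}\,\Div\bigl(u(s)\otimes v(s)\bigr)\,ds$.

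The engine is the $L^p$--$L^q$ smoothing of the heat semigroup. For $q\geq p$ one has $\|e^{t\triangle}f\|_{L^q}\leq C\,t^{-\frac32(\frac1p-\frac1q)}\|f\|_{L^p}$, and since $\mathbb{P}$ and the Riesz-type operators implicit in $\mathbb{P}\,\Div$ are Calder\'on--Zygmund operators bounded on every $L^q$, $1<q<\infty$, one also gets $\|e^{t\triangle}\mathbb{P}\,\Div F\|_{L^q}\leq C\,t^{-\frac12-\frac32(\frac1p-\frac1q)}\|F\|_{L^q}$. Guided by the scaling (\ref{scaling}), I would fix an auxiliary exponent $q>3$ with $q\geq p$ and $3(\frac1p-\frac1q)<1$, set $\alpha=\frac32(\frac1p-\frac1q)$, and introduce the Kato space $X_T$ with norm $\|u\|_{X_T}=\sup_{0<t\leq T}t^{\alpha}\|u(t)\|_{L^q}$; the smoothing estimate shows $e^{t\triangle}u_0\in X_T$. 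The continuity statement $u\in C([0,T];L^p)$ would then be recovered separately after the solution is produced.

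With these ingredients the heart of the matter is the bilinear estimate $\|B(u,v)\|_{X_T}\leq C\,T^{\frac12-\frac{3}{2p}}\,\|u\|_{X_T}\|v\|_{X_T}$. It is obtained by bounding $\|u\otimes v(s)\|_{L^{q/2}}\leq s^{-2\alpha}\|u\|_{X_T}\|v\|_{X_T}$, feeding this into the $\mathbb{P}\,\Div$ smoothing estimate, and evaluating the time integral $\int_0^t(t-s)^{-\frac12-\frac{3}{2q}}s^{-2\alpha}\,ds$ by the Beta function; the two exponents are integrable precisely because $q>3$ and $3(\frac1p-\frac1q)<1$, and the surviving power of $t$ is $t^{\frac12-\frac{3}{2p}}$. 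This is the decisive role of the hypothesis $p>3$: in the subcritical regime the prefactor $T^{\frac12-\frac{3}{2p}}$ is a \emph{positive} power of $T$, so for any fixed datum it can be made small by shrinking $T$, whereas at the critical exponent $p=3$ it is scale-invariant and only small data can be treated. Granting the bilinear bound, the standard contraction lemma (if $\|e^{t\triangle}u_0\|_{X_T}\leq\eta$ and $4C T^{\frac12-\frac{3}{2p}}\eta<1$, the map $u\mapsto e^{t\triangle}u_0-B(u,u)$ has a unique fixed point in the ball of radius $2\eta$) yields a solution on some $[0,T^*)$, and uniqueness in the full class $C([0,T^*);L^p)$ follows from a Gronwall argument comparing two solutions. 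I expect this bilinear estimate, with the correct choice of $q$ and the bookkeeping of the time exponents, to be the main obstacle, since everything hinges on the Beta-integral exponents being strictly above $-1$ and on the residual power of $T$ being positive.

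Finally, the smoothness and decay assertions come from bootstrapping. For $t>0$ the solution instantly lies in $L^\infty$ by the smoothing estimate pushed to $q=\infty$; differentiating the Duhamel formula and iterating transfers this gain to all spatial derivatives, and parabolic interior regularity then upgrades it to $u\in C^\infty((0,T^*)\times\R^3)$. The property $u(t,\cdot)\in C_0(\R^3)$ for $t>0$ follows because each term of the integral equation is the heat semigroup applied to an $L^p$ function, which maps into $C_0(\R^3)$, and the map $t\mapsto u(t)$ is continuous into this space away from $t=0$.
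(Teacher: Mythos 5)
The paper does not actually prove this statement: it is quoted as a known result of Kato \cite{Kato} (see also the book of Lemari\'e-Rieusset \cite{Rieusset}) and used as a black box in Section 7, so there is no internal proof to compare yours against. Your proposal is the standard Fujita--Kato mild-solution argument, which is precisely how the cited result is established in the literature, and your estimates check out: with $\alpha=\frac32(\frac1p-\frac1q)$ the bilinear bound in $X_T$ does produce the prefactor $T^{\frac12-\frac{3}{2p}}$, positive exactly because $p>3$, and the bootstrap to smoothness and to $u(t,\cdot)\in C_0(\R^3)$ is routine from there. Two bookkeeping remarks. First, to recover $u\in C([0,T^*);L^p(\R^3))$ from the fixed point you need the smoothing estimate from $L^{q/2}$ into $L^p$, which requires $q\leq 2p$; your stated constraints on $q$ (namely $q\geq p$, $q>3$, $3(\frac1p-\frac1q)<1$) do not guarantee this, but choosing $q$ close to $p$ fixes it --- in fact, since $p>3$ is subcritical, the auxiliary Kato norm is unnecessary: one can contract directly in $C([0,T];L^p(\R^3))$, because $\int_0^t(t-s)^{-\frac12-\frac{3}{2p}}\,ds<\infty$ precisely when $p>3$, which simplifies both existence and the continuity claim. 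Second, your uniqueness step needs a Gronwall lemma with the weakly singular kernel $(t-s)^{-\frac12-\frac{3}{2p}}$ rather than the classical one; this is standard but should be invoked explicitly. With these minor adjustments your outline is a correct proof of the quoted theorem.
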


To begin, let $u$ be a weak solution of the Navier-Stokes equation
on $(0, \infty)\times \mathbb{R}^3$ satisfying the condition that
$\int_{0}^{\infty}\int_{\mathbb{R}^3} \frac{|u|^5}{log (1+ |u| )}
dx\,ds < \infty$. Then, by using the elementary inequality $log(1+t)
\leqslant t$, which is valid for all $t \geqslant 0$, we can deduce
at once that

\begin{equation*}
\int_{0}^{\infty}\int_{\mathbb{R}^3} |u|^4 dx\,ds \leqslant
\int_{0}^{\infty}\int_{\mathbb{R}^3} \frac{|u|^5}{log(1 + |u|)}dx\,ds < \infty .
\end{equation*}

Now, let $\lambda \in (0, 2)$ to be arbritary chosen and fixed. Since $\int_{0}^{\infty}\int_{\mathbb{R}^3}
|u|^4 dx\,ds < \infty$, it follows that the quantity $\int_{\mathbb{R}^3}|u(t,x)|^4 dx$ must be finite for
almost every $t \in (0, \infty)$. So, with respect to $\lambda$, we can choose some $\tau_{0}$ with $0 <
\tau_{0} < \lambda$ in such a way that $\int_{\mathbb{R}^3}|u(\tau_{0} , x)|^4 dx < \infty$, or equivalently
$u(\tau_{0} , \cdot ) \in L^4(\mathbb{R}^3)$. So, by using a simple shifting technique, we  may apply the
Kato's Theorem quoted as above to deduce that there exists some positive constant $T^* > \tau_{0}$ so that
our weak solution  $u$ is smooth on $(\tau_{0} , T^*) \times \mathbb{R}^3$, and that $u(t, \cdot) \in
C_{0}(\mathbb{R}^3)$, for every $t$ with $\tau_{0} < t < T^*$. Hence, we know, in particular, that our weak
solution $u$ must be lying in the space $L_{loc}^{\infty} (\tau_{0} , T^* ; L^{\infty}(\mathbb{R}^3))$. Now,
for some technical purpose, we would like to pick up two numbers $\tau_{1}$ and $\tau_{2}$ which verify the
condition that $\tau_{0} < \tau_{1} < \tau_{2} < min \{\lambda , T^* \}$. Once $\tau_{1}$ and $\tau_{2}$ are
chosen, they will be fixed. Now, from our original weak solution $u$, we can construct another weak solution
$v$ by requiring that $v(t,x) = u(t+ \tau_{1} , x)$. Now, by applying the conclusion of
proposition~\ref{third} to the weak  solution $v$ and the number $\tau_{2} - \tau_{1}$, we can at once deduce
that we have the following inequality

\begin{equation*}
\|v(t, \cdot )\|_{L^{\infty}(\mathbb{R}^3)} \leqslant
A \{ 1 + \int_{0}^{t}\int_{\mathbb{R}^3} |v|^6 dx\,ds\} ,
\end{equation*}

to be valid for all $t > \tau_{2} - \tau_{1}$ , in which $A$ is some universal constant depending only on
$\tau_{2} - \tau_{1}$ . However, this means the same as saying that we have the following
inequality

\begin{equation*}
\|u(t+ \tau_{1} , \cdot )\|_{L^{\infty}(\mathbb{R}^3)} \leqslant
A \{ 1 + \int_{\tau_{1}}^{t + \tau_{1}}\int_{\mathbb{R}^3} |u|^6 dx\,ds \} ,
\end{equation*}

which is valid for all $t > \tau_{2} - \tau_{1}$. Hence , it follows that we can make the following
conclusion

\begin{itemize}
\item for every $t > \tau_{2}$, we have $\|u(t, \cdot)\|_{L^{\infty}(\mathbb{R}^3)} \leqslant A \{ 1 +
\int_{\tau_{1}}^{t}\int_{\mathbb{R}^3} |u|^6 dx\,ds \}$ , in which $A$ is some universal constant depending
only on $\tau_{2} - \tau_{1}$.
\end{itemize}

At this stage, we are ready to apply the Gronwall's argument in the
theory of ordinary differential equations as follow. For this
purpose, we take $\psi (t) = t \cdot log (1 + t)$, which is a
strictly increasing positive valued function on $(0, \infty )$
satisfying the condition that

\begin{equation*}
\int_{1}^{\infty} \frac{1}{\psi (t)} dt = \infty .
\end{equation*}

Then, it follows from our last inequality that

\begin{equation*}
\begin{split}
\|u(t, \cdot)\|_{L^{\infty}(\mathbb{R}^3)}\\ &\leqslant
 A \{ 1 + \int_{\tau_{1}}^{t}\int_{\mathbb{R}^3} \psi (|u|) \frac{|u|^5}{log (1+|u|)}dx\,ds\} \\
&\leqslant A \{ 1 + \int_{\tau_{1}}^{t} \psi (\|u\|_{L^{\infty}(\mathbb{R}^3)})
\int_{\mathbb{R}^3}\frac{|u|^5}{log (1+ |u|)} dx\,ds \}  ,
\end{split}
\end{equation*}

which is valid for all $t > \tau_{2}$. \\
Next ,we put $F(t) = \|u(t, \cdot )\|_{L^{\infty}(\mathbb{R}^3)}$. Then, the above inequality can be
rewritten as

\begin{equation}
F(t) \leqslant A \{  1 + \int_{\tau_{1}}^{t} \psi (F(s)) G(s) ds \},
\end{equation}

for all $t > \tau_{2}$, where $G$ is the function defined by
$G(s) = \int_{\mathbb{R}^3} \frac{|u|^5}{log (1+ |u|)} dx$.
Furthermore, we notice that by the hypothesis of Theorem~\ref{main}, the function $G$ must
satisfies the condition that

\begin{equation*}
\int_{0}^{\infty} G(s) ds = \int_{0}^{\infty}\int_{\mathbb{R}^3}\frac{|u|^5}{log (1 + |u|)}dx\,ds
< \infty .
\end{equation*}

Here, for the sake of convenience, we define

\begin{equation*}
H(t) = A \{ 1 + \int_{\tau_{1}}^{t} \psi (F(s)) G(s) ds\} ,
\end{equation*}

for all $t > \tau_{1}$. Then, our last inequality can be rewritten as

\begin{itemize}
\item $F(t) \leqslant H(t)$ , for all $t > \tau_{2}$.
\end{itemize}

Since $\psi$ is a strictly increasing  positive valued function on $(0 , \infty )$, it follows at once that

\begin{equation*}
\frac{dH}{dt} = A \psi (F(t))G(t) \leqslant A \psi (H(t))G(t) ,
\end{equation*}

which is valid for all $t > \tau_{2}$. That is, we have the fact that

\begin{itemize}
\item for every $t > \tau_{2}$ , we have $\frac{dH}{dt} \leqslant A \psi (H(t))G(t)$.
\end{itemize}

As a result, by taking integration in time over the interval $(\tau_{2} , t)$, for $t > \tau_{2}$, it follows at
once that

\begin{equation*}
\Psi (H(t)) - \Psi (H(\tau_{2})) \leqslant A \int_{\tau_{2}}^{t} G(s) ds ,
\end{equation*}

for all $t > \tau_{2}$, in which $\Psi$ is the function defined by
$\Psi(y) = \int_{A}^{y} \frac{1}{\psi (y)} dy$. Hence, we can deduce
that

\begin{itemize}
\item for every $t > \tau_{2}$, we have $\Psi (H(t)) \leqslant \Psi (H(\tau_{2})) + A \int_{\tau_{2}}^{t} G(s)
ds$ .
\end{itemize}

At this stage, in order to complete the Gronwall's argument successfully, we definitely need to show that
$H(\tau_{2})$ is finite. To achive this, let us recall that we have already used the Kato's Theorem to deduce
that our original weak solution $u$ must satisfies $u \in L_{loc}^{\infty} (\tau_{0} , T^* ;
L^{\infty}(\mathbb{R}^3))$, and this at once tells us that
$\|u\|_{L^{\infty}([\tau_{1} , \tau_{2}]\times \mathbb{R}^3)} =  sup_{t\in [\tau_{1} , \tau_{2} ]} F(t) <  +\infty$ ,
because of the fact that $0 < \tau_{0} < \tau_{1} < \tau_{2} < min \{\lambda , T^* \}$. Hence, it follows
immediately that

\begin{equation*}
H(\tau_{2}) \leqslant A \{1 + \psi (\|u\|_{L^{\infty}([\tau_{1} ,\tau_{2}]\times \mathbb{R}^3)}) \int_{\tau_{1}}^{\tau_{2}}  G(s) ds \}
< +\infty    .
\end{equation*}

So, we can now combine $H(\tau_{2}) < \infty$, and $\int_{0}^{\infty}G(s)ds < \infty$ to deduce that

\begin{itemize}
\item for every $t > \tau_{2}$, $\Psi (H(t)) \leqslant \Psi (H(\tau_{2})) + \int_{\tau_{2}}^{t} G(s) ds <
\infty$.
\end{itemize}

That is, we now know that $\Psi (H(t))$ must be finite, for every $t > \tau_{2}$. Since
$\int_{A}^{+\infty}\frac{1}{\psi (y)} dy = +\infty$, this will force us to admit that $H(t) < \infty$, for all
$t > \tau_{2}$. Hence, we eventually have the conclusion

\begin{itemize}
\item for every $t > \tau_{2}$, we have $\|u(t, \cdot )\|_{L^{\infty}(\mathbb{R}^3)} = F(t) \leqslant H(t) <
\infty$ . So, in particular, we now know also that $u \in L_{loc}^{\infty}(\tau_{2} , \infty ;
L^{\infty}(\mathbb{R}^3))$.
\end{itemize}

Since our weak solution $u$ now satisfies the condition $u \in L_{loc}^{\infty} (\tau_{2} , \infty ;
L^{\infty}(\mathbb{R}^3))$, by
applying the famous result of Serrin~\cite{Serrin} that we mentioned in the introduction with the case
in which $p = q = \infty$, $u\in L_{loc}^{\infty} ( (\tau_{2} , \infty )\times \mathbb{R}^3 )$
immediately implies that $u \in C^{\infty} ( (\tau_{2} , \infty )\times \mathbb{R}^3)$, and hence we have
the conclusion that $u$ must be smooth on $(\lambda ,\infty) \times \mathbb{R}^3$ (notice that $\tau_{2} <
\lambda $). Since $\lambda \in (0, 2)$ is arbritary chosen in the above argument, we can finally deduce
that any weak solution $u$ satisfying the hypothesis of Theorem~\ref{main} must be smooth on $(0,
\infty)\times \mathbb{R}^3$.

\end{document}